\newtheorem{theorem}{Theorem}[section]
\newtheorem{proposition}{Proposition}[section]
\theoremstyle{definition}
\newtheorem{definition}[theorem]{Definition}
\theoremstyle{remark}
\newtheorem{remark}[theorem]{Remark}
\numberwithin{equation}{section}
\newcommand{\be}{\begin{equation}}
\newcommand{\ee}{\end{equation}}
\newcommand{\ben}{\begin{equation*}}
\newcommand{\een}{\end{equation*}}
\newcommand{\bc}{\begin{center}}
\newcommand{\ec}{\end{center}}
\newcommand{\bal}{\begin{equation*}}
\newcommand{\enal}{\end{equation*}}
\newcommand{\al}{\alpha}
\newcommand{\bt}{\beta}
\newcommand{\gm}{\gamma}
\newcommand{\om}{\omega}
\newcommand{\nno}{\nonumber}
\begin{document}
\baselineskip=22pt

\title{\textbf{Multiresolution Analysis Based on Coalescence Hidden-variable FIF}}
\date{}
\author{G. P. Kapoor$^1$  and  Srijanani Anurag Prasad$^2$}
\maketitle \vspace{-1.5cm} \bc Department of Mathematics and Statistics\\
Indian Institute of Technology Kanpur\\  Kanpur 208016, India\\
$^1$ gp@iitk.ac.in  $^2$ jana@iitk.ac.in\ec

\onehalfspacing
\begin{abstract}
In the present paper, multiresolution analysis arising from
Coalescence Hidden-variable Fractal Interpolation Functions (CHFIFs)
is accomplished. The availability of a larger set of free variables
and constrained variables with CHFIF in multiresolution analysis
based on CHFIFs provides more control in reconstruction of functions
in $L_2(\mathbb{R})$ than that provided by multiresolution analysis
based only on Affine Fractal Interpolation Functions (AFIFs). In our
approach, the vector space of CHFIFs is introduced, its dimension is
determined and Riesz bases of vector subspaces $\mathbb{V}_k, k \in
\mathbb{Z}$, consisting of certain CHFIFs in $L_2(\mathbb{R})\bigcap
C_0(\mathbb{R})$ are constructed. As a special case, for the vector
space of CHFIFs of dimension $4$, orthogonal bases for the vector
subspaces $\mathbb{V}_k, k \in \mathbb{Z}$, are explicitly
constructed and, using these bases, compactly supported continuous
orthonormal wavelets are generated.
\end{abstract}

\textbf{Keywords:} 
Fractal, Interpolation, Iteration, Affine, Coalescence, Attractor,
Multiresolution Analysis, Riesz Basis, Orthogonal Basis, Scaling
Function, Wavelets

\textbf{2010 Mathematics Subject Classification:} Primary  42C40, 41A15;
Secondary 65T60, 42C10, 28A80

\newpage

\section{Introduction}

The theory of multiresolution analysis provides a powerful method to
construct wavelets having far reaching applications in analyzing
signals and images~\cite{mallat89_2,telesca04}. They permit
efficient representation of functions at multiple levels of detail,
i.e. a function $f \in L_2(\mathbb{R})$, the space of real valued
functions $g$ satisfying $\|g\|_{L^2} = \int\limits_{\mathbb{R}}
|g(x)|^2 dx < \infty $,  could be written as limit of successive
approximations, each of which is smoothed version of $f$. The
multiresolution analysis was first introduced by
Mallat~\cite{mallat89_1} and Meyer~\cite{meyer89} using a single
function. The multiresolution analysis based upon several functions
was developed in~\cite{goodman94,grochenig92,herve94}.
In~\cite{hardin92}, multiresolution analysis of $L^2(\mathbb{R})$
were generated from certain classes of Affine Fractal Interpolation
Functions (AFIFs). Such results were then generalized to several
dimensions in~\cite{geronimo93} and~\cite{geronimo92}.
In~\cite{geronimo94}, orthonormal basis for the vector space  of
AFIFs were explicitly constructed. A few years later, Donovan et
al~\cite{donovan96} constructed orthogonal compactly supported
continuous wavelets using multiresolution analysis arising from
AFIFs. The interrelations among AFIFs, Multiresolution Analysis and Wavelets are treated in detail by Massopust~\cite{massopust10} .
However, multiresolution analysis of $L_2(\mathbb{R})$ based
on Coalescence Hidden-variable Fractal Interpolation Functions
(CHFIFs) which exhibits both self-affine and non-self-affine nature
has hitherto remained unexplored. In the present work, such a
multiresolution analysis is accomplished. The availability of a
larger set of free variables and constrained variables in
multiresolution analysis based on CHFIFs provides more control in
reconstruction of functions in $L_2(\mathbb{R})$ than that provided
by multiresolution analysis based only on affine FIFs. Further,
orthogonal bases consisting of dilations and translations of scaling
functions, for the vector subspaces $\mathbb{V}_k, k \in
\mathbb{Z}$, are explicitly constructed and, using these bases,
compactly supported continuous orthonormal wavelets are generated in
the present work.

The organization of the paper is as follows: In
Section~\ref{sec:PAR}, a brief introduction on the construction of
CHFIF  is given, the vector space of CHFIFs  is introduced and a few
auxiliary results, including a result on determination of dimension
of this vector space, are found. In Section~\ref{sec:MRA}, Riesz
bases of vector subspaces $\mathbb{V}_k, k \in \mathbb{Z}$,
consisting of certain CHFIFs in $L_2(\mathbb{R})\bigcap
C_0(\mathbb{R})$ are constructed. The multiresolution analysis of
$L_2(\mathbb{R})$ is then carried out in terms of nested sequences
of vector subspaces $\mathbb{V}_k, k \in \mathbb{Z}$. As a special
case, for the vector space of CHFIFs of dimension $4$, orthogonal
bases for the vector subspaces $\mathbb{V}_k, k \in \mathbb{Z}$, are
explicitly constructed in Section~\ref{sec:OSF} and, using these
bases, compactly supported continuous orthonormal wavelets are
generated in the same section.

\section{Preliminaries and Auxiliary Results }\label{sec:PAR}

In this section, first a brief introduction on the construction of
CHFIF is given. This  is followed by the development of some
auxiliary results needed for the multiresolution analysis generated
by CHFIFs.

A Coalescence Hidden-variable Fractal Interpolation Function (CHFIF)
is constructed such that the graph of CHFIF is  attractor of an IFS.
Let the interpolation data be $\{(x_i,y_i) \in \mathbb{R}^2 :
i=0,1,\ldots,N \}$, where $-\infty < x_0 < x_1 < \ldots < x_N <
\infty$. By introducing a set of real parameters $z_i$ for
$i=0,1,\ldots,N$, consider the generalized interpolation data
$\{(x_i,y_i,z_i) \in \mathbb{R}^3 : i=0,1,\ldots,N \}$. The
contractive homeomorphisms $ L_n : I \rightarrow I_n $  for $ n =
1,\ldots, N $, are  defined by
\begin{equation}\label{eq:Ln}
L_n(x) = a_n x + b_n
\end{equation}
where, $ a_n = \frac{x_n-x_{n-1}}{x_N-x_0}$ and $ b_n = \frac{x_N
x_{n-1}-x_0 x_n}{x_N-x_0} $.  For $n=1,\ldots, N$, define the maps $
\om_n: I \times \mathbb{R}^2 \rightarrow I \times \mathbb{R}^2$ by
\begin{equation}\label{eq:omn}
\om_n(x,y,z)=(L_n(x),F_n(x,y,z))
\end{equation}
where, the functions $ F_n : I \times \mathbb{R}^2 \rightarrow
\mathbb{R}^2  $ given by,
\begin{equation}\label{eq:Fnxyz}
F_n(x,y,z)=\big(\al_n y + \bt_n z + p_n(x), \gm_n z + q_n(x)\big)
\end{equation}
satisfy  the join-up conditions
\begin{equation} \label{eq:Fncond}
F_n(x_0,y_0,z_0)=(y_{n-1},z_{n-1}) \quad \mbox{and} \quad
F_n(x_N,y_N,z_N)=(y_{n},z_{n}).
\end{equation}
Here, the variables $\al_n$,  $\gm_n$  are free variables and
$\bt_n$ are constrained variables such that $|\al_n| < 1 , \ |\gm_n|
< 1 \ \ \mbox{and}\ \ |\bt_n|+|\gm_n| < 1 $ and the functions $
p_n(x)$ and $ q_n(x)$  are linear polynomials given by
\begin{equation}\label{eq:pnqnx}
 p_n(x)=c_n x + d_n \quad \mbox{and} \quad q_n(x)=e_n x + h_n.
 \end{equation}

It is proved in~\cite{chand07} that there exist a metric equivalent
to Euclidean metric such that the functions $\om_n$, defined by $
\om _n(x,y,z)= \left( L_n(x), F_n(x,y,z)\right)$, are contraction
maps and, consequently,
\begin{align}\label{eq:chifs}
\{I \times \mathbb{R}^2; \om _n , n=1,2,\ldots, N \}
\end{align}
is the desired IFS for construction of CHFIF. Hence, there exists an
attractor $ A$ in $H(I \times \mathbb{R}^2) $ such that $ A =
\bigcup\limits_{n=1}^N \om_n(A)= \bigcup\limits_{n=1}^N \{
\om_n(x,y,z) : (x,y,z) \in A \}$ and is graph of a continuous
function $f:I \rightarrow \mathbb{R}^2$ such that $f(x_i)=(y_i,z_i)$
for $i=0,1,\ldots, N$, i.e. $A=\{(x,f(x)):x \in I ,\
f(x)=(y(x),z(x))\}$. By expressing  $ f $ component-wise as $f
=(f_1,f_2)$, \emph{Coalescence Hidden-variable Fractal Interpolation
Function} (CHFIF)~\cite{chand07} is defined as the continuous
function $f_1$ for the given interpolation data $\{(x_i,y_i):i =
0,1,\ldots,N\}$.

In order to develop the multiresolution analysis of
$L^2(\mathbb{R})$, based on CHFIF, the space of CHFIF needs to be
introduced. For this purpose, let
\begin{equation}\label{eq:tn}
t_n = (p_n,q_n)
\end{equation}
  and  $t = (t_1, t_2,\ldots,t_{N})$, where $p_n$ and $q_n$ are linear polynomials
given by~\eqref{eq:pnqnx}. Then, $ T = \{ t=(t_1,\ldots,t_{N}) : t_i
= (p_i,q_i), p_i,q_i \in \mathcal{P}_1, i=1,2,\ldots,N\} $, with
usual point-wise addition and scalar multiplication, is a vector
space,  where $\mathcal{P}_1$ is the class of linear polynomials. It
is easily seen that on $\mathbb{B}(I,\mathbb{R}^2) $, the set of
bounded functions from $I$ to $\mathbb{R}^2$ with respect to maximum
metric $d^*(f,g) = \max\limits_{x \in I } \{ |f_1(x) -
g_1(x)|,|f_2(x) - g_2(x)|\} $, the function $\Phi_{t}$ defined by
\begin{equation}\label{eq:phit}
(\Phi_{t}f)(x) = F_n(L_n^{-1}(x),f(L_n^{-1}(x)))
\end{equation}
 for $x \in I_n = [x_{n-1},x_n]$, $n=1,2,\ldots,N$, is a contraction map. Therefore, by
Banach contraction mapping theorem, $\Phi_{t}$ has a unique fixed
point $\ f_{t} \in \mathbb{B}(I,\mathbb{R}^2)$. By join-up
conditions~\eqref{eq:Fncond}, it follows that $f_{t} \in
\mathbb{C}(I,\mathbb{R}^2)$, the set of continuous functions  from
$I$ to $\mathbb{R}^2$. The following proposition gives the existence
of a linear isomorphism between the vector space $T$ and the vector
space $\mathbb{C}(I,\mathbb{R}^2)$.

\begin{proposition}\label{prop:LI}
The mapping $\Theta: T \rightarrow \mathbb{C}(I,\mathbb{R}^2)$
defined by $ \Theta(t)=f_{t}$ is a linear isomorphism.
\end{proposition}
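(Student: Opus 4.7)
The plan is to unpack $\Theta$ via the defining fixed-point relation $\Phi_t f_t = f_t$ and to verify the three requirements of an isomorphism: well-definedness/linearity, injectivity, and (onto its image) surjectivity. The crucial structural remark behind everything is that, on each subinterval $I_n$, the Read--Bajraktarevi\'c-type operator can be decomposed as
\[
(\Phi_t f)(x) \;=\; A_n\, f(L_n^{-1}(x)) \;+\; \bigl(p_n(L_n^{-1}(x)),\, q_n(L_n^{-1}(x))\bigr),
\qquad
A_n = \begin{pmatrix} \al_n & \bt_n \\ 0 & \gm_n \end{pmatrix},
\]
in which the matrix $A_n$ is determined only by the (fixed) free/constrained variables $\al_n,\bt_n,\gm_n$, whereas the inhomogeneous part depends linearly on $t_n=(p_n,q_n)$. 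Thus $(t,f)\mapsto \Phi_t f$ is affine in $f$ with $t$-independent linear part, and linear in $t$ for fixed $f$.

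For linearity of $\Theta$, I would fix $t,s\in T$ and scalars $a,b\in\mathbb{R}$ and show that $af_t+bf_s$ is a fixed point of $\Phi_{at+bs}$ by direct substitution, using the decomposition above: the $A_n$-part splits as $aA_nf_t(L_n^{-1}(x))+bA_nf_s(L_n^{-1}(x))$, while the polynomial part splits by linearity of $t\mapsto(p_n,q_n)$. Collecting terms gives $\Phi_{at+bs}(af_t+bf_s)=a\Phi_t f_t+b\Phi_s f_s = af_t+bf_s$. Uniqueness of the fixed point of $\Phi_{at+bs}$ (Banach, as quoted from \cite{chand07}) then forces $f_{at+bs}=af_t+bf_s$, i.e.\ $\Theta(at+bs)=a\Theta(t)+b\Theta(s)$.

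For injectivity, I would assume $\Theta(t)=f_t\equiv 0$ and plug the zero function into the fixed-point equation $\Phi_t 0 = 0$. On each $I_n$ this reduces to $\bigl(p_n(L_n^{-1}(x)),q_n(L_n^{-1}(x))\bigr)=(0,0)$; since $L_n$ is a homeomorphism of $I$ onto $I_n$, the range of $L_n^{-1}$ is all of $I$, so $p_n\equiv 0$ and $q_n\equiv 0$ as polynomials on $I$. Hence each $t_n=0$ and therefore $t=0$ in $T$, giving $\ker\Theta=\{0\}$.

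The main obstacle is interpreting ``$\Theta$ is onto $\mathbb{C}(I,\mathbb{R}^2)$'': strictly speaking $T$ is finite dimensional (four scalar parameters per $t_n$, so $\dim T = 4N$), whereas the space of all continuous $\mathbb{R}^2$-valued functions on $I$ is not, so the statement must be read as ``$\Theta$ is a linear isomorphism of $T$ onto its image,'' namely the vector space of CHFIF-pair functions associated with the fixed data $(\al_n,\bt_n,\gm_n)$. With that reading, surjectivity onto the image is immediate from the very definition of the image, and the substantive content of the proposition is the linearity and injectivity established above. As an immediate corollary this pins down the dimension of the space of CHFIFs as $\dim T$, which is what the subsequent multiresolution construction will require.
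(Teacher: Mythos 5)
Your linearity and injectivity arguments are essentially the paper's: the paper also shows $af_t+f_s$ is a fixed point of $\Phi_{at+s}$ and invokes uniqueness of the fixed point, and its one--one step is exactly your reduction of $\Phi_t 0=0$ to $(p_n,q_n)=(0,0)$ via surjectivity of $L_n^{-1}$ onto $I$. Where you genuinely diverge is surjectivity. The paper claims $\Theta$ is onto all of $\mathbb{C}(I,\mathbb{R}^2)$ and ``proves'' it by writing, for arbitrary continuous $f=(f_1,f_2)$, the candidate preimage $p_i(f)=f_1\circ L_i-\al_i f_1-\bt_i f_2$ and $q_i(f)=f_2\circ L_i-\gm_i f_2$ and asserting $t(f)\in T$; as you observe, this cannot be literally correct, since those functions lie in $\mathcal{P}_1$ only for very special $f$, and indeed no linear isomorphism can exist between the $4N$-dimensional space $T$ and the infinite-dimensional $\mathbb{C}(I,\mathbb{R}^2)$. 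Your reading --- $\Theta$ is a linear isomorphism of $T$ onto its image --- is what the paper's construction actually establishes (the displayed $t(f)$ inverts $\Theta$ precisely on that image), so your route is the more honest one, at the cost of weakening the stated conclusion.

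Two caveats. First, your closing ``corollary'' that this pins down the dimension of the space of CHFIFs as $\dim T=4N$ is wrong and contradicts the paper's own Proposition~\ref{prop:dim}: the space $\mathcal{S}_0$ has dimension $2(N+1)$ and the CHFIF space $\mathcal{S}_0^1$ has dimension $2N$. The point is that for a generic $t\in T$ the fixed point $f_t$ is merely bounded; continuity forces $2(N-1)$ matching conditions at the interior nodes (equivalently, the join-up conditions~\eqref{eq:Fncond}), which cut $4N$ down to $2(N+1)$, and projecting to first components removes two more dimensions. Second, both you and the paper gloss over well-definedness: $\Theta(t)=f_t$ lands in $\mathbb{C}(I,\mathbb{R}^2)$ only for those $t$ satisfying these matching conditions, so strictly one should either restrict $T$ accordingly or state the proposition with codomain $\mathbb{B}(I,\mathbb{R}^2)$. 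Neither caveat affects your linearity or injectivity arguments, which are sound.
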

\begin{proof}
 The assertion of the proposition  is proved by
establishing \\(i) $( a f_t + f_s )_i(x) = a f_{t, i}(x)+ f_{s,
i}(x) $, $i=1,2$, where $f_t$  and $a f_t + f_s$  are written
component-wise as $f_t = (f_{t, 1},f_{t, 2})$  and $ a f_t +f_s =
((a f_t +f_s)_1 , (a f_t+f_s)_2)$, (ii) $( a f_t + f_s ) = f_{a t +
s}$, (iii) $\Theta$ is onto and (iv) $\Theta$ is one-one.

The identity (i) follows by equating the components of left and
right hand side in the identity $ ( a f_{t} + f_s )(x) = a f_{t}(x)+
f_s(x)$.

(ii) $( a f_{t} + f_s ) = f_{a t + s}$  : By the definition of
$\Phi_{t}$,
\begin{align*}
\lefteqn{(\Phi_{a t + s}(a f_{t} + f_s ))(x)}\\
& = F_n(L_n^{-1}(x),(a f_{t} + f_s )(L_n^{-1}(x)))\\
& =  \Big(\al_n (a f_{t} + f_s)_1(L_n^{-1}(x)) + \bt_n (a f_{t} +
f_s)_2(L_n^{-1}(x)) + (a\ p_n + \hat{p}_n)(L_n^{-1}(x)),
\\ & \quad \quad \gm_n (a f_{t} + f_s)_2(L_n^{-1}(x)) + (a\ q_n +\hat{q}_n)(L_n^{-1}(x)) \Big)
\end{align*}
Using identity (i), it follows that,\vspace{-0.2cm}
\begin{align*}
\lefteqn{(\Phi_{a t + s}(a f_{t} + f_s ))(x)}\\
& = \Big(\al_n \big(a f_{t,1} (L_n^{-1}(x)) + f_{s,1}
(L_n^{-1}(x))\big) +
\bt_n \big(a f_{t,2} (L_n^{-1}(x)) + f_{s,2}(L_n^{-1}(x))\big)\\
& \quad \quad \mbox{}+ \big(a\ p_n (L_n^{-1}(x)) +
\hat{p}_n(L_n^{-1}(x))\big),
\\ & \quad \quad \gm_n \big(a f_{t,2} (L_n^{-1}(x)) + f_{s,2}(L_n^{-1}(x))\big) +
\big(a\  q_n (L_n^{-1}(x)) + \hat{q}_n(L_n^{-1}(x))\big) \Big)
\end{align*}
The above equation gives the following on simplification:\vspace{-0.2cm}
\begin{align*}
\lefteqn{(\Phi_{a t + s}(a f_{t} + f_s ))(x)}\\
&= a \Big(\al_n f_{t,1} (L_n^{-1}(x)) + \bt_n f_{t,2} (L_n^{-1}(x))
+ p_n (L_n^{-1}(x)),
\gm_n f_{t,2} (L_n^{-1}(x))  + q_n (L_n^{-1}(x)) \Big) \\
& \quad \mbox{}+ \Big(\al_n f_{s,1} (L_n^{-1}(x)) + \bt_n f_{s,2}
(L_n^{-1}(x)) + \hat{p}_n (L_n^{-1}(x)),
\gm_n f_{s,2} (L_n^{-1}(x))  + \hat{q}_n (L_n^{-1}(x)) \Big)\\
& = a f_{t}(x) + f_s(x)
\end{align*}
Therefore, $a f_{t} + f_s$ is a fixed point of $\Phi_{a t + s}$ for
all $ a \in \mathbb{R}$ and $ t, s \in T$. By uniqueness of fixed
point of $\Phi_{a t + s}$, it follows that $( a f_{t} + f_s ) = f_{a
t + s}$.

(iii) $\Theta$ is onto :
 Let $f =(f_1,f_2 )\in
\mathbb{C}(I,\mathbb{R}^2)$. Define $q_i(f) = f_2  \circ L_i - \gm_i
f_2 $ and $p_i(f) = f_1 \circ L_i - \al_i f_1 - \bt_i f_2 $ for
$i=1,\ldots,N$. Suppose $t (f)= (t_1(f), t_2(f),\ldots t_N(f))$,
where  $ t_i(f)=(p_i(f),q_i(f))$. Then $t(f) \in T$ whenever $f \in
\mathbb{C}(I,\mathbb{R}^2)$. Also $f_{t(f)} =f$.

(iv) $\Theta$  is one-one : Let $f_{t}(x)= (0,0) $ for all values of
$x \in I$. Then,\vspace{-0.3cm}
\begin{align*}
f_{t}(x) = (0,0) & \Leftrightarrow  \Phi_{t}(f_{t})(x) = (0,0) \\
& \Leftrightarrow  F_n(L_n^{-1}(x), f_{t}(L_n^{-1}(x))) = (0,0)\\
& \Leftrightarrow  (p_n, q_n)=(0,0) \ \mbox{for every n} \\
& \Leftrightarrow   t = (0,\ldots,0)
\end{align*}
\end{proof}

To introduce the space of CHFIFs,  let the set $\mathcal{S}_0 $
consisting of functions $ f~:~I \rightarrow \mathbb{R}^2 $  be
defined as
  $ \mathcal{S}_0 = \{ f :
f=(f_1,f_2),\ f_1 \ \mbox{is a CHFIF passing through} \\
\{(x_i,y_i) \in \mathbb{R}^2 : i=0,1,\ldots,N \} \ \mbox{and}\ f_2 \
\mbox{is an AFIF passing through   } \{(x_i,z_i) \in \mathbb{R}^2 :
i=0,1,\ldots,N \} \} $. Then, $\mathcal{S}_0 $ is a vector space,
with usual point-wise addition and scalar multiplication. The space
of CHFIFs is now defined as follows:

\begin{definition}\label{defn:chfifspace}
Let $\mathcal{S}_0^1 $ be the set of functions $ f_1 : I \rightarrow
\mathbb{R}$ that are first components  of functions $f \in
\mathcal{S}_0$. The \textbf{space of CHFIFs} is the  set
$\mathcal{S}_0^1$ together with the maximum metric $d^*(f,g) =
\max\limits_{x \in I} |f(x) - g(x)|$.
\end{definition}

It is easily seen that the space of CHFIFs $\mathcal{S}_0^1 $ is
also a vector space with point-wise addition and scalar
multiplication. The following proposition gives the dimension of
$\mathcal{S}_0^1 $:

\begin{proposition}\label{prop:dim}
The dimension of space of CHFIFs   is $2N$.
\end{proposition}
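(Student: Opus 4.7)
The plan is to compute $\dim \mathcal{S}_0^1$ by sandwiching it between the ambient CHFIF--AFIF pair space $\mathcal{S}_0$ and the kernel of the first--component projection. First I would show $\dim \mathcal{S}_0 = 2N+2$, then exhibit a linear surjection $\pi : \mathcal{S}_0 \to \mathcal{S}_0^1$ whose kernel is two--dimensional, and conclude via rank--nullity.

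For $\dim \mathcal{S}_0 = 2N+2$, I would identify $\mathcal{S}_0$ with the interpolation data space $\mathbb{R}^{2(N+1)}$ through the evaluation map $(f_1,f_2) \mapsto (f_1(x_i), f_2(x_i))_{i=0}^N$. This map is plainly linear and injective. For surjectivity, note that for each $n$ the join--up conditions~\eqref{eq:Fncond} form a $4 \times 4$ linear system uniquely expressing the coefficients of $p_n, q_n \in \mathcal{P}_1$ in terms of the prescribed data; any data therefore produces an admissible tuple $t \in T$, and Proposition~\ref{prop:LI} yields a pair $f_t = (f_1,f_2) \in \mathcal{S}_0$ whose nodal values recover the chosen data.

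The projection $\pi : \mathcal{S}_0 \to \mathcal{S}_0^1$, $(f_1,f_2) \mapsto f_1$, is linear and surjective by the very definition of $\mathcal{S}_0^1$. To compute its kernel I would substitute $f_1 \equiv 0$ into the refinement
\[
f_1(L_n(x)) = \al_n f_1(x) + \bt_n f_2(x) + p_n(x),
\]
which forces $\bt_n f_2 + p_n \equiv 0$ on $I$. Since $p_n \in \mathcal{P}_1$ and $\bt_n \neq 0$, the auxiliary component $f_2$ must itself be a linear polynomial. Conversely, every $f_2 \in \mathcal{P}_1$ produces a legitimate $(0,f_2) \in \mathcal{S}_0$ via $p_n := -\bt_n f_2$ and $q_n := f_2 \circ L_n - \gm_n f_2$, both of which remain in $\mathcal{P}_1$. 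Hence $\ker \pi \cong \mathcal{P}_1$ has dimension $2$, and rank--nullity gives $\dim \mathcal{S}_0^1 = (2N+2) - 2 = 2N$.

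The principal subtlety is the first step: the identification $\mathcal{S}_0 \cong \mathbb{R}^{2(N+1)}$ rests on simultaneously verifying that the join--up conditions linearly parameterize admissible $t \in T$ by the data and that the fixed--point assignment $t \mapsto f_t$ is linear, the latter being exactly the content of Proposition~\ref{prop:LI}. Once that identification is in hand, the kernel computation and the final arithmetic are routine.
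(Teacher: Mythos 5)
Your proposal is correct and follows essentially the same route as the paper: identify $\mathcal{S}_0$ with the nodal data $\mathbb{R}^{2(N+1)}$ (via Proposition~\ref{prop:LI} and the join-up conditions), project onto the first component, show the kernel consists of pairs $(0,f_2)$ with $f_2$ a linear polynomial (dimension $2$), and finish by rank--nullity. Your explicit verification of the converse inclusion $\mathcal{P}_1 \subseteq \ker\pi$ and your explicit use of $\bt_n \neq 0$ only make precise what the paper leaves implicit.
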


\begin{proof}
Consider the operator  $\Phi_{t}f =(\Phi_{t,1} f_1, \Phi_{t,2}
f_2)$. The operators $ \Phi_{t,i} : \mathbb{B}(I,\mathbb{R})
\rightarrow \mathbb{B}(I,\mathbb{R})$, $i=1,2$, where
$\mathbb{B}(I,\mathbb{R})$ is the set of bounded functions, satisfy
\begin{equation}\label{eq:la1}
\Phi_{t,1} f_1(x) = \al_n f_1(L_n^{-1}(x)) + \bt_n f_2(L_n^{-1}(x))
+ p_n (L_n^{-1}(x))
\end{equation} and
\begin{equation}\label{eq:la2}
\Phi_{t,2} f_2(x) =  \gm_n f_2(L_n^{-1}(x)) + q_n (L_n^{-1}(x))
\end{equation}
 for $x \in [x_{n-1},x_{n}]$. By Proposition~\ref{prop:LI} and~\eqref{eq:la2}, it follows that
$f_2$ is completely determined by $f_2(\frac{i}{N})$ for
$i=0,1,\ldots N$. Further, it follows by~\eqref{eq:la1} that $f_1$
depends on $f_2$. Then, for $f =(f_1,f_2) \in \mathcal{S}_0$,  the
function $f_1$ is the unique CHFIF passing through $ (\frac{i}{N},
y_i)$, while the function $f_2$ is the unique AFIF  passing through
$ (\frac{i}{N}, z_i)$. Hence,
\begin{equation}\label{eq:dms0}
\mbox{dimension of}\ \mathcal{S}_0 = 2(N+1).
\end{equation}
Consider the projection map $P : S_0 \rightarrow S_0^1 $. Then,
Kernel of $P = \{f \in \mathcal{S}_0 \ \mbox{such that} \ P(f) =
0\}$ is a proper subset of $S_0$ and consists of elements in the
form $(0,0)$ and $(0,f_2)$. For the element $(0,f_2) \in \mbox{Ker}
P$, it is observed that $\bt_n f_2(L_n^{-1}(x)) + p_n(L_n^{-1}(x)) =
0 $ for $x \in I_n$. Hence, for all $x \in I$, it is seen that
$f_2(x) =\frac{-1}{\bt_n} p_n(x)$. With $x=x_0$, it follows that
$c_i = \frac{\bt_i}{\bt_1}c_1$ and $d_i = \frac{\bt_i}{\bt_1}d_1,
i=2,\ldots,N$. Consequently, if $(0,f_2) \in \mbox{Ker} P $ then
$f_2$ is a linear polynomial. So, dimension of $\mbox{Ker} P = 2$.
Therefore, by Rank-Nullity Theorem [56], $ \mbox{dimension of}\
\mathcal{S}_0^1 = \mbox{dimension of}\ \mathcal{S}_0 -
\mbox{dimension of Ker}\ P = 2(N+1) - 2 = 2N$.
\end{proof}

\begin{remark}\label{rem:S0closed}
By Proposition~\ref{prop:LI} and ~\eqref{eq:dms0}, it follows that
the map $\theta: \mathbb{R}^{N+1} \times \mathbb{R}^{N+1}
\rightarrow \mathcal{S}_0$ defined by $\theta(y,z) = f $ is a linear
isomorphism, where $ f=(f_1,f_2) \in \mathcal{S}_0$, $f_1$ is the
unique CHFIF passing through the points $ (x_i, y_i)$ and $f_2$ is
the unique AFIF passing through the points $ (x_i, z_i)$,  $\ y =
(y_0,y_1,\ldots,y_N) $ and $\ z = (z_0,z_1,\ldots,z_N )$. Thus,
$\mathcal{S}_0$ is linearly isomorphic to $\mathbb{R}^{N+1} \times
\mathbb{R}^{N+1}$. Consider the metric space $(\mathbb{R}^{N+1}
\times \mathbb{R}^{N+1},d_{\mathbb{R}^{2(N+1)}} ) $, where
$d_{\mathbb{R}^{2(N+1)}}$ is given by  $d_{\mathbb{R}^{2(N+1)}}( y
\times z, \bar{y} \times \bar{z})= \max\limits_{0 \leq i \leq N}
(|y_i - \bar{y}_i|,|z_i-\bar{z}_i|)$, $\ y = (y_0,y_1,\ldots,y_N)$,
$ z = (z_0,z_1,\ldots,z_N )$,  $ \bar{y} =
(\bar{y}_0,\bar{y}_1,\ldots,\bar{y}_N) $ and $\ \bar{z} =
(\bar{z}_0,\bar{z}_1,\ldots,\bar{z}_N )$.  Then, with the metric
$d^*$ on the set $\mathcal{S}_0$, it is observed by~\eqref{eq:phit}
that the maps $\theta$ and $\theta^{-1}$ are continuous. Hence
$\mathcal{S}_0$ is closed and complete subspace of
$L_2(\mathbb{R})$.
\end{remark}

\begin{remark}\label{rem:S01closed}
Let $\{f_{n,1}\}$ be a sequence in $\mathcal{S}_0^1$ such that
$\lim\limits_{ n \rightarrow \infty} f_{n,1} = f^*_1 $ and  $\{f_n =
(f_{n,1},f_{n,2})\}$ be a convergent sequence  in $\mathcal{S}_0$,
where $f_{n,2}$  are AFIFs. Since $\mathcal{S}_0$ is closed,
$\lim\limits_{ n \rightarrow \infty} f_n = f^* \equiv(f^*_1,f^*_2)
\in \mathcal{S}_0$. Thus, $ f^*_1 \in \mathcal{S}_0^1$ and
consequently, $\mathcal{S}_0^1$ is closed subspace of
$L_2(\mathbb{R})$.
\end{remark}

\section{Multiresolution Analysis Based on CHFIF }\label{sec:MRA}

In this section, the multiresolution analysis  of $L_2(\mathbb{R})$
is generated by using a finite set of CHFIFs. For this purpose, the
sets $\mathbb{V}_k, k \in \mathbb{Z}$, consisting of collection of
CHFIFs are defined.  It is first shown that the sets $\mathbb{V}_k$
form a nested sequence. The multiresolution analysis of $
L_2(\mathbb{R}) $ is then generated by constructing Riesz bases of
vector subspaces $\mathbb{V}_k$ consisting of orthogonal functions
in $ L_2(\mathbb{R}) $.

To introduce certain sets of  CHFIFs  needed for multiresolution of
$L_2(\mathbb{R})$, let $L_2(\mathbb{R},\mathbb{R}^2) $ be a
collection of functions $f :\mathbb{R} \rightarrow \mathbb{R}^2$
such that $f=(f_1,f_2)$  and $f_1 $, $f_2 \in L_2(\mathbb{R})$ and
$C_0(\mathbb{R},\mathbb{R}^2) $ be a collection of functions $f
:\mathbb{R} \rightarrow \mathbb{R}^2$ such that $f=(f_1,f_2)$  and
$f_1 $, $f_2 \in C_0(\mathbb{R})$, the set of all real valued
continuous functions defined on $\mathbb{R}$ which vanish at
infinity.    Define the set $\tilde{\mathbb{V}}_0$  as
\begin{equation*}
 \tilde{\mathbb{V}}_0
= \tilde{S}_0 \bigcap L_2(\mathbb{R},\mathbb{R}^2) \bigcap
C_0(\mathbb{R},\mathbb{R}^2)
\end{equation*}
where, $\tilde{S}_0=\{ f : f  =(f_1,f_2),\ f_1|_{[i-1,i)} \ \mbox{is
a CHFIF  and}
  f_2|_{[i-1,i)} \ \mbox{is an AFIF, } \ i \in \mathbb{Z} \}$.

That the set $ \tilde{\mathbb{V}}_0$ is not empty is easily seen by
considering a function $f=(f_1,f_2) \in \mathcal{S}_0$, with
$f(x_0)=(0,0)=f(x_N)$ and $f(x) = (0,0) $ for $x \not \in I$, which
obviously belongs to $\tilde{\mathbb{V}}_0$. Let, for $ k \in
\mathbb{Z}$,
\begin{equation*}
 \tilde{\mathbb{V}}_k
= \{ f :   f(N^{-k} \cdot) \in \tilde{\mathbb{V}}_0\}.
\end{equation*}

\newpage
The sets $ \tilde{\mathbb{V}}_0$ and $ \tilde{\mathbb{V}}_k$ are
easily seen to be closed sets as follows: Let $\{f_n\}$ be a
sequence in $\tilde{\mathbb{V}}_0$ such that $\lim\limits_{ n
\rightarrow \infty} f_n = f^* = (f^*_1,f^*_2)$. Now, $\lim\limits_{
n \rightarrow \infty} f_n|[i-1,i) = f^*|[i-1,i) =
(f^*_1|[i-1,i),f^*_2|[i-1,i))$. By Remark~\ref{rem:S0closed} and
Remark~\ref{rem:S01closed}, it is observed that $f^*_1|[i-1,i)$ is a
CHFIF  and $f^*_2|[i-1,i)$ is an AFIF, $i \in \mathbb{Z}$. Thus,
$f^* \in \tilde{S}_0$, which implies that $\tilde{S}_0$ is a closed
set. Consequently, $\tilde{\mathbb{V}}_0$ and $\tilde{\mathbb{V}}_k,
, k \in \mathbb{Z}$ are closed sets. Now, consider sets
$\mathbb{V}_0$ and $\mathbb{V}_k, k \in \mathbb{Z}\setminus 0$,  of
CHFIFs  defined as follows:
\begin{equation}\label{eq:setV0}
 \mathbb{V}_0
= \{ f_1 : f_1 \ \mbox{ is the first component of some } f
=(f_1,f_2) \in \tilde{\mathbb{V}}_0\}
\end{equation}
and
\begin{equation}\label{eq:setVk}
 \mathbb{V}_k
= \{ f_1 :    f_1(N^{-k} \cdot) \in \mathbb{V}_0\}.
\end{equation}
The sets $\mathbb{V}_k , k \in \mathbb{Z}  $,  with $L_2$-norm, are
subspaces  of $L_2(\mathbb{R})$. The following proposition shows
that the subspaces $\mathbb{V}_k , k \in \mathbb{Z}$,  of
$L_2(\mathbb{R})$, form a nested sequence:

\begin{proposition}\label{prop:subseq1}
The subspaces $\mathbb{V}_k  , k \in \mathbb{Z}$ are vector
subspaces of $L_2(\mathbb{R})$ and form a nested sequence $\ldots
\supseteq \mathbb{V}_{-1} \supseteq \mathbb{V}_0 \supseteq
\mathbb{V}_1 \supseteq \ldots $.
\end{proposition}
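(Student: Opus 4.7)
The plan is to handle the two assertions of Proposition~\ref{prop:subseq1} in turn: that each $\mathbb{V}_k$ is a vector subspace of $L_2(\mathbb{R})$, and that the sequence is decreasing in $k$.

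For the vector-subspace claim, I would first establish it at level zero and then transfer it by linearity of dilation. Given $f_1, g_1 \in \mathbb{V}_0$ and a scalar $a$, pick lifts $(f_1, f_2), (g_1, g_2) \in \tilde{\mathbb{V}}_0$ supplied by the definition of $\mathbb{V}_0$. The pair $a(f_1, f_2) + (g_1, g_2)$ remains in $L_2(\mathbb{R}, \mathbb{R}^2) \cap C_0(\mathbb{R}, \mathbb{R}^2)$, and on each unit interval $[i-1, i)$ its first component is a CHFIF by the vector-space structure of $\mathcal{S}_0^1$ underlying Proposition~\ref{prop:dim}, its second an AFIF by the analogous classical fact. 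Hence $af_1 + g_1 \in \mathbb{V}_0$. Since $f_1 \mapsto f_1(N^{-k}\cdot)$ is a linear bijection of $L_2(\mathbb{R})$, the vector-space property transfers directly to every $\mathbb{V}_k$.

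For the nesting, the entire chain reduces to the single step $\mathbb{V}_1 \subseteq \mathbb{V}_0$: from $f_1 \in \mathbb{V}_{k+1}$ iff $f_1(N^{-(k+1)}\cdot) = [f_1(N^{-k}\cdot)](\cdot/N) \in \mathbb{V}_0$ iff $f_1(N^{-k}\cdot) \in \mathbb{V}_1$, the assumed inclusion $\mathbb{V}_1 \subseteq \mathbb{V}_0$ gives $f_1(N^{-k}\cdot) \in \mathbb{V}_0$, i.e.\ $f_1 \in \mathbb{V}_k$. To prove $\mathbb{V}_1 \subseteq \mathbb{V}_0$, take $f_1 \in \mathbb{V}_1$, choose a lift $(g_1, g_2) \in \tilde{\mathbb{V}}_0$ with $g_1(x) = f_1(x/N)$, set $f_2(y) := g_2(Ny)$, and form $h := (f_1, f_2) = g(N\cdot)$. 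The change of variable $y = x/N$ transfers $L_2$- and $C_0$-membership from $g$ to $h$; what remains is to verify, for each integer $j$, that $h|_{[j-1, j)}$ belongs to $\tilde{S}_0$. Shifting to $[0, 1)$, I would take as candidate interpolation data $y_n = g_1(n)$, $z_n = g_2(n)$ for $n = 0, \ldots, N$, let $(\tilde f_1, \tilde f_2)$ be the unique CHFIF/AFIF pair on $[0, 1]$ that this data specifies through the IFS~\eqref{eq:chifs}, and invoke Banach uniqueness of the fixed point of $\Phi_{t}$ to reduce $h|_{[0, 1)} = (\tilde f_1, \tilde f_2)$ to the single identity $\Phi_{t} h = h$ on $[0, 1)$.

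The main obstacle is this last fixed-point verification. Unpacked on the subinterval $[(n-1)/N, n/N]$ using the formulas for $F_n$ and $L_n$, it amounts to showing
\begin{equation*}
g_1(x + n - 1) = \al_n g_1(Nx) + \bt_n g_2(Nx) + p_n(x), \qquad g_2(x + n - 1) = \gm_n g_2(Nx) + q_n(x),
\end{equation*}
for $x \in [0, 1]$, with $p_n, q_n$ the linear polynomials forced by the join-up conditions~\eqref{eq:Fncond} on the data $(y_n, z_n)$. The right-hand sides involve $(g_1, g_2)$ on the single unit interval $[0, 1)$, whereas the left-hand sides involve $(g_1, g_2)$ restricted to the neighbouring unit interval $[n-1, n)$; collapsing the $N$ separate scale-one IFS relations that $(g_1, g_2)$ already satisfies on $[0, 1), \ldots, [N-1, N)$ into this single scale-one IFS relation for $h$ on $[0, 1)$, and matching the linear parts of $p_n$ and $q_n$ carefully, is where the real computation lies and is the most delicate point I expect to confront.
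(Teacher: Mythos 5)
Your reduction of the nesting to the single inclusion at level $k=0$, and your argument that each $\mathbb{V}_k$ is a vector subspace, are consistent with the paper (which simply invokes Proposition~\ref{prop:LI} for the subspace claim). The genuine gap is in the key step, and it is not merely a deferred computation: the fixed-point identity you leave unverified is the entire content of the proposition, and in the orientation in which you have set it up it does not follow from the hypotheses. Reading \eqref{eq:setVk} literally, you write $f_1\in\mathbb{V}_1$ as the compressed function $h:=g(N\cdot)$ with $g\in\tilde{\mathbb{V}}_0$ arbitrary, and you then need $g_1(x+n-1)=\al_n g_1(Nx)+\bt_n g_2(Nx)+p_n(x)$ on $[0,1]$ (note that the right-hand side involves $g$ on all of $[0,N]$, not on $[0,1)$ as you state). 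Membership of $g$ in $\tilde{\mathbb{V}}_0$ imposes no relation whatsoever between the CHFIF/AFIF pieces of $g$ on different unit intervals, whereas your identity forces $g$ on each $[n-1,n)$ to be an affine image of the whole of $g$ on $[0,N)$; for instance, a $g$ that vanishes identically (with zero second component) on some unit intervals and equals a nontrivial CHFIF bump on another cannot satisfy it, and its compression is not a CHFIF on $[0,1)$ since the only CHFIF through all-zero nodal data is the zero function. In short, gluing $N$ compressed CHFIF pieces does not yield a CHFIF, so the verification you flag as ``delicate'' cannot be completed as formulated.

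What is missing is the idea the paper's proof actually runs on, in the opposite (stretching) orientation: it takes $f=g_1$ with $g\in\tilde{\mathbb{V}}_1$, so that $G=\mathrm{graph}(g|_{[0,N]})$ satisfies the self-referential equation $G=\bigcup_{i=1}^N w_i(G)$, and then conjugates: $w_j(G)=\bigcup_{i=1}^N \bigl(w_j\circ w_i\circ w_j^{-1}\bigr)(w_j(G))$. Since the linear parts of the conjugated maps have nonzero entries in the same positions as those of the $w_i$, the family $\{w_j\circ w_i\circ w_j^{-1}\}_i$ is again a CHFIF-generating IFS on $[j-1,j]\times\mathbb{R}^2$ with attractor $w_j(G)$; hence $g_1|_{[j-1,j)}$ is a CHFIF, $g\in\tilde{\mathbb{V}}_0$, and $f\in\mathbb{V}_0$. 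That is, the provable inclusion is that a CHFIF at the coarser scale restricts on each unit interval to a CHFIF, not that concatenated finer-scale pieces assemble into one. (The sign confusion originates in the paper's own formula $f_1(N^{-k}\cdot)\in\mathbb{V}_0$ versus the orientation its proof uses, but to salvage your argument you would have to rework it with $g\in\tilde{\mathbb{V}}_1$ stretched over $[0,N]$ and supply precisely this conjugation step, for which your proposal currently has no substitute.)
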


\begin{proof}
It follows from Proposition~\ref{prop:LI} that the sets
$\mathbb{V}_k, k \in \mathbb{Z}$ are vector subspaces of
$L_2(\mathbb{R})$. Now, to show $\mathbb{V}_k \supseteq
\mathbb{V}_{k+1} $ for all $k \in \mathbb{Z}$, it suffices to prove
the inclusion relation for $k=0$. Let $f \in \mathbb{V}_1$. Then,
$f|_{[0,N) }=g_1|_{[0,N)}$ for some  $ g=(g_1,g_2)
\in~\tilde{\mathbb{V}}_1$. If  $ G = \mbox{graph}(g|_{[0,N]})$ then,
$G~=~\bigcup\limits_{i=1}^{N} w_i (G)$ implies, for $j \in
\{1,\ldots,N\}$,  $w_j(G) = \bigcup\limits_{i=1}^{N} w_j \circ w_i
\circ w_j^{-1}(w_j(G))$, where $ w_i(G) =(L_i(x),F_i(x,y,z)) $ for
all $(x,y,z) \in G$ , $i=1,\ldots N$. Expressing $w_i$ and $w_j
\circ w_i \circ w_j^{-1}$ in matrix form as $w_i(x,y,z) = A_i(x,y,z)
+ B_i$ and $w_j \circ w_i \circ w_j^{-1}(x,y,z) = A_{i,j}(x,y,z) +
B_{i,j}$, it is observed that non-zero entries in matrices $A_i$ and
$A_{i,j}$ occur at the same places. Consequently, $w_j(G)$ is graph
of $g|_{[j-1,j)}$, so that $g \in \tilde{\mathbb{V}}_0$. It
therefore follows that $g_1|_{[j-1,j)}$ is a CHFIF on the interval
$[j-1,j)$. Thus, the function $f|_{[j-1,j)} = g_1|_{[j-1,j)} $ is a
CHFIF on the interval $[j-1,j)$ and consequently, $f \in
\mathbb{V}_0$.  \end{proof}

In order to generate a multiresolution analysis  of
$L_2(\mathbb{R})$ using CHFIFs, the inner product on the space
$\mathbb{V}_k , k \in \mathbb{Z}  $ is defined by $ \langle
f_1,\hat{f}_1 \rangle = \int\limits_{\mathbb{R}} f_1(x)\hat{f}_1(x)\
dx $. Using the following recurrence relations, \vspace{-0.5cm}
\bc $
f_1(L_n(x)) = \al_n f_1(x) + \bt_n f_2(x) + p_n (x) $ \ec   and \vspace{-0.5cm} \bc $
\hat{f}_1(L_n(x)) = \hat{\al}_n \hat{f}_1(x) + \hat{\bt}_n \hat{f}_2(x) + \hat{p}_n (x),
$
\ec
it is observed that, for  $f_1, \hat{f}_1 \in
\mathbb{V}_0$,
\begin{equation}\label{eq:I}
\langle f_1,\hat{f}_1\rangle = \frac{ \begin{array}{r}
\sum\limits_{n=1}^{N} a_n \Big(
 \al_n\ \hat{\bt}_n\ \langle f_1,\hat{f}_2\rangle + \bt_n\ \hat{\al}_n\ \langle f_2,\hat{f}_1\rangle
 +  \bt_n\ \hat{\bt}_n\
\langle f_2,\hat{f}_2\rangle   \\ +  \al_n \langle
f_1,\hat{p}_n\rangle
 + \hat{\al}_n \langle \hat{f}_1,p_n\rangle   + \bt_n \langle
f_2,\hat{p}_n\rangle \\+ \hat{\bt}_n \langle \hat{f}_2,p_n\rangle +
\langle p_n,\hat{p}_n\rangle \Big)
\end{array}}{1- \sum\limits_{n=1}^{N} a_n \al_n
\hat{\al}_n}
\end{equation}
where, $a_n$, $\al_n$ and $\bt_n$, $p_n$;   $ \hat{\al}_n$ and  $
\hat{\bt}_n$,    $\hat{p}_n$,   are given by ~\eqref{eq:Ln},
\eqref{eq:Fnxyz}, \eqref{eq:pnqnx} respectively for the
interpolation data $\{(x_i,y_i,z_i) : i=0,1,\ldots,N\}$ and
$\{(x_i,\hat{y}_i,\hat{z}_i) : i=0,1,\ldots,N\}$.
Using~\eqref{eq:I}, the set of orthogonal functions that forms the
Riesz basis of set $\mathbb{V}_0$ is constructed as follows:

Let, the free variables $\al_j$, $\gm_j$  and constrained variables
$\bt_j$, $j=1,\ldots, N$, $N > 1$, in the construction of CHFIF be
chosen such that $\al_j + \bt_j \neq \gm_j$ for atleast one $j$.
Consider, the points $y_i$ and $z_i \in R^{n+1}$, $i=0,\ldots,N$,
given by
\begin{equation}\label{eq:ypoints}
\left. \begin{array}{lll} \ y_{0}&=(1,r_1,\ldots,r_{N-1},0), &
y_{N}=(0,s_1,\ldots,s_{N-1},1), \\ \ y_{i}&=(0,\ldots,1,\ldots,0) ,&  i=1,\ldots,N-1,  \\
y_{N+1+i}&=(0,u_{i,1},\ldots,u_{i,N-1},0), &  i=0,\ldots,N;
\end{array} \right\}
\end{equation}
\begin{equation}\label{eq:zpoints}
z_{i}=(0,\ldots,0),\   \ z_{N+1+i}=(0,\ldots,1,\ldots,0) , \
i=0,\ldots,N
\end{equation}
and a set of $2(N+1)$ functions $\tilde{f}_i =
(\tilde{f}_{i,1},\tilde{f}_{i,2}) \in \mathcal{S}_0, \ i=0, \ldots,
2N+1$, where the CHFIF $\tilde{f}_{i,1}$ passes through the points
$(x_k,y_{i_k}), k=0,\ldots,N+1, \ y_{i_k}$ being the $k^{th}$
component of $y_i$ and AFIF $\tilde{f}_{i,2}$ passes through the
points  $(x_k,z_{i_k}), k=0,\ldots,N+1, \ z_{i_k}$ being the
$k^{th}$ component of $z_i$. Let the function $\tilde{f}^*_i :
\mathbb{R} \rightarrow \mathbb{R}^2$, $i=0,1,\ldots 2N+1$,  be the
extension of the function $\tilde{f}_i : I \rightarrow \mathbb{R}^2$
such that $\tilde{f}^*_i(x) = \tilde{f}_i(x) $ for $ x \in I$ and
$\tilde{f}^*_i(x) = (0,0)$ for $ x  \not \in  I$.

For ensuring the orthogonality of the functions $\tilde{f}^*_{i,1}$
with respect to the inner product  in $L_2(\mathbb{R})$, let the
values of  $r_i $ , $s_i $ and $u_{i,j}$, $ i,j=1,\ldots,N-1$,
in~\eqref{eq:ypoints} be chosen such that
\begin{equation}\label{eq:risiuij}
  \langle\tilde{f}_{i,1}, \tilde{f}_{0,1}\rangle  =  0  ,
 \quad \ \langle \tilde{f}_{i,1},\tilde{f}_{N,1}\rangle = 0, \quad
 \langle\tilde{f}_{N+1+j,1},\tilde{f}_{i,1}\rangle  =  0.
\end{equation}

Let, for $ i=1,2,\ldots,N-1$,
\begin{equation}\label{eq:zetaeta}
 \zeta_i  = < \tilde{f}_{N+1+i,1},\tilde{f}_{0,1}>  \quad \mbox{and}
 \quad
\eta_i = <  \tilde{f}_{N+1+i,1},\tilde{f}_{N,1}>.
\end{equation}
The free variables $\al_j, \gm_j$  and constrained variables $
\bt_j$, $ j=1,2,\ldots,N$, in~\eqref{eq:Fnxyz} are $3N$ variables
and  $\zeta_i =\eta_i =0$, $ i=1,\ldots, N-1$ is a system of $2N-2$
equations. Suppose there exist no $\alpha_j, \gamma_j $ and
$\beta_j$, $j=1,\ldots,N$, in $(-1,1)$ such that $\zeta_i =\eta_i
=0$, $ i=1,\ldots, N-1$, then dimension of $S_0^1 < 2N$, which is a
contradiction. Hence, there exists atleast one set of $\alpha_j,
\gamma_j $ and $\beta_j$, $j=1,\ldots, N$, in $(-1,1)$ such that
$\zeta_i =\eta_i =0$, $ i=1,\ldots, N-1$. The free variables $\al_j,
\gm_j$  and constrained variables $ \bt_j$, $ j=1,2,\ldots,N$,
in~\eqref{eq:Fnxyz} are chosen such that, for $ i=1,2,\ldots,N-1$,
$\zeta_i =0$ and $\eta_i =0$.

It is easily seen that the functions $\tilde{f}^*_i$, $\
i=0,\ldots,2N+1$, $\ \tilde{f}^*_{i,1}$, $\ i=0,\ldots,N\ $  and the
functions $\ \tilde{f}^*_{j,2}$,  $j=N+1,\ldots,2N+1$, are linearly
independent. Now, by~\eqref{eq:phit},  $\ \tilde{f}^*_{j,1}(x) =
\al_n \tilde{f}^*_{j,1}(L_n^{-1}(x)) + \bt_n
\tilde{f}^*_{j,2}(L_n^{-1}(x)) + p_{n,j} (L_n^{-1}(x))$  and  $
\tilde{f}^*_{j,2}(x) = \gm_n \tilde{f}^*_{j,2}(L_n^{-1}(x)) +
q_{n,j} (L_n^{-1}(x))$,  $\ j=0,1,\ldots, 2N+1$, where $p_{n,j}$ and
$q_{n,j}$  are linear polynomials. By~\eqref{eq:zpoints}, the
functions $\tilde{f}^*_{j,2}$, $\ j=N+2,\ldots,2N$, are not linear
polynomials. Hence, $\sum\limits_{k=1}^{N-1} a_k
\tilde{f}^*_{N+1+k,1}(x) - \al_n \sum\limits_{k=1}^{N-1} a_k
\tilde{f}^*_{N+1+k,1}(L_n^{-1}(x)) = 0$  if and only if  $a_k=0$,
which implies $\tilde{f}^*_{N+1+k,1}$, $k=1,\ldots,N-1$, are
linearly independent. The linear independence of $\
\tilde{f}^*_{k,1}$, $\tilde{f}^*_{N+1+k,1}$, $k=1,\ldots,N-1$
together with~\eqref{eq:risiuij} will yield same number of
orthogonal functions in Gram-Schmidt process.

Let $\{\phi_{i,1}\}_{i=1}^{2N-1} \subset\mathbb{V}_0 , i \neq N, $
be a sequence of orthogonal functions obtained from
$\{\tilde{f}^*_{i,1}\}_{i=1}^{2N}, i \neq N,N+1$   by the
Gram-Schmidt process. Set, \vspace{-0.3cm}
\begin{equation*}  \phi_{N,1} =   \left \{
\begin{array}{ll}
\tilde{f}^*_{N,1}(x)& x \in [0,1) \\
 \tilde{f}^*_{0,1}(x-1)& x \in [1,2) \\
 0 & \mbox{otherwise}
 \end{array}
\right.
\end{equation*}
It is easily seen by Proposition~\ref{prop:dim} that $\phi_{i,1}$,
$i=1,2,\ldots,2N-1$, are non-zero functions. Further,
by~\eqref{eq:risiuij} and~\eqref{eq:zetaeta}, it follows that
$\{\phi_{i,1}: i=1,2,\ldots,2N-1\}$ is an orthogonal set. This is
the set required for the generation of multiresolution analysis of
$L_2(\mathbb{R})$ in the following theorem :

\begin{theorem}\label{th:mra}
Let free variables $\al_j$, $\gm_j$  and constrained variables
$\bt_j$, $j=1,\ldots, N$, $N > 1$, in the construction of CHFIF be
chosen such that $\al_j + \bt_j \neq \gm_j$ for atleast one $j$  and
$\zeta_i  $ ,  $\eta_i $ given by~\eqref{eq:zetaeta} be such that
$\zeta_i =0 $ ,  $\eta_i =0 $,  $i=1,\ldots,N-1$. Then,
\begin{equation}\label{eq:V0clos}
 \mathbb{V}_0 =
\mbox{clos}_{L^2}\ \mbox{span} \{\phi_{i,1}(\cdot -l):
i=1,\ldots,2N-1, \  l \in \mathbb{Z} \},
\end{equation}
where, $\phi_{i,1} \in \mathbb{V}_0$. Also, the set
$\{\phi_{i,1}\}_{i=1}^{2N-1}$ generates a continuous, compactly
supported multiresolution analysis of $L_2(\mathbb{R})$.
\end{theorem}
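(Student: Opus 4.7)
The plan is to prove two things: first, that $\mathbb{V}_0$ coincides with the closed $L^2$-span of integer translates of the $\phi_{i,1}$, and second, that the nested family $\{\mathbb{V}_k\}_{k\in\mathbb{Z}}$ satisfies the standard multiresolution axioms with a Riesz basis generated from the $\phi_{i,1}$. I would organize the argument around three blocks: (a) the two containments of the characterization~\eqref{eq:V0clos}, (b) the Riesz basis property of $\{\phi_{i,1}(\cdot-l)\}$, and (c) scaling, intersection triviality, and density of the union.

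For $\mbox{clos}_{L^2}\,\mbox{span}\{\phi_{i,1}(\cdot - l)\} \subseteq \mathbb{V}_0$, I would observe that each $\phi_{i,1}$ is, by construction, the first component of a compactly supported element of $\tilde{\mathbb{V}}_0$, and that integer translation preserves membership in $\tilde{\mathbb{V}}_0$ because the partition $\{[l, l+1)\}_{l\in\mathbb{Z}}$ is translation-invariant. Since $\mathbb{V}_0$ is closed in $L_2(\mathbb{R})$ by Remarks~\ref{rem:S0closed} and~\ref{rem:S01closed} applied piecewise, the closed span is contained in $\mathbb{V}_0$.

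For the reverse inclusion, I would take $f \in \mathbb{V}_0$, write $f = g_1$ with $g = (g_1, g_2) \in \tilde{\mathbb{V}}_0$, and parameterize $g|_{[l, l+1)}$ by its interpolation data $(y^{(l)}, z^{(l)}) \in \mathbb{R}^{N+1}\times\mathbb{R}^{N+1}$ via the isomorphism $\theta$ of Remark~\ref{rem:S0closed}. The key accounting is that the $2N-1$ generators together with integer translates supply exactly the right number of degrees of freedom once continuity at integers is imposed: the $N-1$ functions $\phi_{i,1}$ for $i = 1,\ldots,N-1$ encode the interior $y$-values, the $N-1$ functions for $i = N+2,\ldots,2N-1$ encode the interior $z$-values through the hidden-variable coupling, and the two-interval support of $\phi_{N,1}$ together with its integer translates encodes the boundary value at each integer by gluing the right endpoint on $[l, l+1)$ with the left endpoint on $[l+1, l+2)$. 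Subtracting the corresponding linear combination of translates from $f$ produces a function in $\mathbb{V}_0$ whose interpolation data vanish at every integer lattice point, and Proposition~\ref{prop:dim} then forces it to be zero; the orthogonality of the $\phi_{i,1}$ secures $L^2$-convergence of the series in general.

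For the multiresolution axioms, nestedness is Proposition~\ref{prop:subseq1} and the scaling relation $f \in \mathbb{V}_k \iff f(N^{-1}\cdot) \in \mathbb{V}_{k-1}$ is immediate from~\eqref{eq:setVk}. The Riesz basis property follows by normalizing the orthogonal translates produced above. For $\bigcap_k \mathbb{V}_k = \{0\}$, any $f$ in the intersection is a CHFIF on intervals $[N^k(l-1), N^k l)$ for arbitrarily large $k$; combined with $f \in L^2(\mathbb{R}) \cap C_0(\mathbb{R})$, this forces $f \equiv 0$. For the density $\overline{\bigcup_k \mathbb{V}_k} = L_2(\mathbb{R})$, I would approximate a generic $g \in C_c(\mathbb{R})$ by the CHFIF interpolant of $g$ on the grid $N^{-k}\mathbb{Z}$ and use the uniform contractivity of the IFS~\eqref{eq:chifs} together with uniform continuity of $g$ to deduce $L^2$-convergence. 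The main obstacle I anticipate is this density statement, which requires a quantitative approximation estimate showing that a CHFIF through values of a continuous function on a fine grid converges uniformly to that function; the hidden-variable structure complicates the classical AFIF argument because the second component $f_2$ must be chosen compatibly and controlled. A secondary obstacle is the bookkeeping in the reverse inclusion of the first claim, where one must confirm that the $2N-1$ generators capture exactly the $2N$-dimensional slice of $\mathcal{S}_0^1$ on each unit interval once the continuity constraint at integer nodes is imposed.
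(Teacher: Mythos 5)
Your overall architecture (characterize $\mathbb{V}_0$ as the closed span, then check nestedness, trivial intersection, density, and the Riesz property) matches the paper's, and your treatment of the two inclusions in~\eqref{eq:V0clos} is essentially the paper's argument (values at the nodes $i/N$ determine $g\in\tilde{\mathbb{V}}_0$, hence a unique expansion in the generators and their translates, with closedness of $\tilde{\mathbb{V}}_0$ giving the $L^2$-closure). But there is a genuine gap in your step (b): you claim the Riesz basis property ``follows by normalizing the orthogonal translates,'' and under the hypotheses of this theorem the system $\{\phi_{i,1}(\cdot-l)\}$ is \emph{not} orthogonal. The boundary function $\phi_{N,1}$ is supported on $[0,2]$, being $\tilde{f}^*_{N,1}$ on $[0,1)$ and $\tilde{f}^*_{0,1}(\cdot-1)$ on $[1,2)$, so $\langle \phi_{N,1},\phi_{N,1}(\cdot-1)\rangle=\langle\tilde{f}_{0,1},\tilde{f}_{N,1}\rangle$, and nothing in~\eqref{eq:risiuij} or the hypotheses $\zeta_i=\eta_i=0$ forces this inner product to vanish (that extra orthogonality, $\rho(\al_1,\al_2)=0$, is only imposed later, in Section~\ref{sec:OSF}, for the wavelet construction with $N=2$). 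This is exactly why the paper's part (d) does real work: it bounds $\|\sum_i c_i\phi_{N,1}(\cdot-i)\|_{L^2}$ below by $\sqrt{\tau}\,\|\phi_{N,1}\|_{L^2}\|c\|_{l^2}$, where $\tau>0$ is the smallest eigenvalue of the Gram-type matrix of $f_{0,1}$ and $f_{N,1}$ on $I$ (positive because these two functions are linearly independent), and above by $\sqrt{3}\,\|\phi_{N,1}\|_{L^2}\|c\|_{l^2}$; only the generators with $i\neq N$ and their translates are mutually orthogonal. Without an argument of this kind your Riesz bounds do not follow.

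Two further points where your sketch stops short of a proof, though the ideas are repairable. For $\bigcap_k\mathbb{V}_k=\{0\}$, ``CHFIF on huge intervals plus $L^2\cap C_0$ forces $f\equiv 0$'' is not by itself a mechanism; the paper's route is to use finite-dimensionality of the restriction space $U_0$ to get $\|f\|_\infty\leq c\|f\|_{L^2}$ on $\mathbb{V}_0$ by norm equivalence and translation invariance, and then the scaling relation to get $\|f\|_\infty\leq cN^{k/2}\|f\|_{L^2}$ on $\mathbb{V}_k$, which kills any $f$ in the intersection. For density, you correctly flag the quantitative CHFIF-interpolation estimate as your main obstacle, but the paper avoids it entirely: it exhibits the CHFIF interpolating the value $1$ at all nodes as an explicit finite combination of the generators and their translates (a partition-of-unity-type reproduction), and then invokes Proposition 3.1 of Geronimo--Hardin--Massopust together with the compact support and continuity of the $\phi_{i,1}$ to conclude $\mbox{clos}_{L_2}\bigcup_k\mathbb{V}_k=L_2(\mathbb{R})$. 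Adopting that route would remove the hardest analytic step from your plan.
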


\begin{proof}
It is obvious that functions $\phi_{i,1} ,i=1, \ldots,2N-1$, are
compactly supported and are elements of $\mathbb{V}_0$. Now, $\ f
\in \mathbb{V}_0 $  implies  $ f|_{[i-1,i) }=g_1|_{[i-1,i) } \ $ is
CHFIF for some $ g=(g_1,g_2) \in \tilde{\mathbb{V}}_0$. Since, every
$ g=(g_1,g_2) \in \tilde{\mathbb{V}}_0$ is determined by  $
g_1(\frac{i}{N})$ and $g_2(\frac{i}{N})$, $i \in \mathbb{Z}$, the
function  $g$ has unique expansion in terms of the functions
$\tilde{f}^*_{i}= (\tilde{f}^*_{i,1},\tilde{f}^*_{i,2})$,
$i=0,\ldots 2N+1$, and their integer translates.  Hence, the
function $f=g_1 \in \mathbb{V}_0$ has a unique expansion in terms of
the functions  $\tilde{f}^*_{i,1}$,   $i=1,\ldots N-1,
N+2,\ldots,2N-2, \ \phi_{N,1}$ and their integer translates. Thus,
CHFIF $f$ has a unique expansion in terms of the functions
$\phi_{i,1} ,\ i=1, \ldots,2N-1$, and their integer translates, i.e.
$ f = \sum\limits_k
 \bigg(\sum\limits_{i=1}^{2N-1}  K_{k,i}  \phi_{i,1}(x-k) \bigg)$
 where,  $K_{k,i} = \int\limits_{\mathbb{R}} f(x)\phi_{i,1}(x-k) \
dx $. Since $f \in \mathbb{V}_0$ is arbitrary, $\mathbb{V}_0 = \
\mbox{span} \{\phi_{i,1}(\cdot -l), \ i=1,\ldots,2N~-~1,
l~\in~\mathbb{Z} \}$. Let $\{f_{n,1}\}$ be a sequence in
$\mathbb{V}_0$ such that $\lim\limits_{ n \rightarrow \infty}
f_{n,1} = f^*_{1} $ and $\{f_n = (f_{n,1},f_{n,2})\}$ be a
convergent sequence  in $\mathbb{V}_0$, where $f_{n,2}$  are AFIFs.
Since $\tilde{\mathbb{V}}_0$ is closed, $\lim\limits_{ n \rightarrow
\infty} f_n = f^* =(f^*_1,f^*_2) \in \tilde{\mathbb{V}}_0$ which
gives $f^*_1|_{[i-1,i)},  i \in \mathbb{Z}$ is a CHFIF. Hence, $
f^*_1 \in \mathbb{V}_0$. Therefore, $\mathbb{V}_0$ is closed and $
\mathbb{V}_0 = \mbox{clos}_{L^2}\ \mbox{span} \{\phi_{i,1}(\cdot
-l), \ i=1,\ldots,2N-1, \  l \in \mathbb{Z} \}$.

Now, it is shown that the set $\{\phi_{i,1}\}_{i=1}^{2N-1}$
generates a continuous, compactly supported multiresolution analysis
of $L_2(\mathbb{R})$:

 (a) By Proposition~\ref{prop:subseq1}, it follows that
 $\ldots \supseteq \mathbb{V}_{-1} \supseteq \mathbb{V}_0 \supseteq \mathbb{V}_1 \supseteq
\ldots $.

 (b)To prove that  $  \bigcap\limits_{k \in \mathbb{Z}} \mathbb{V}_k = \{0\}$, let $ I_n =[ n, n+1], n \in
 \mathbb{Z}$, and $U_0 = \{f_{\chi_{I_0}} : f \in \mathbb{V}_0\}$ where, $f_{\chi_{I_0}} (x) = f (x) $
 if $x \in I_0$ and $f_{\chi_{I_0}} (x) = 0 $
 if $x \not \in I_0$. Since the space $U_0$ is finite dimensional
 over $\mathbb{R}$,  the norms  $\| \cdot \|_{\infty}$ and $\| \cdot
 \|_{L^2}$  restricted to $U_0$ are equivalent.
 Hence, there exist a positive constant $c$
 such that $\| f \|_{\infty} \leq c \| f
 \|_{L^2}$ for all $f \in U_0$. By the property of translation invariance,
it is observed that $\| f_{\chi_{I_n}} \|_{\infty} \leq c \|
f_{\chi_{I_n}}
 \|_{L^2}$ for any $ f \in U_0$. Thus,
 $\| f \|_{\infty} \leq \sup\limits_n \| f_{\chi_{I_n}} \|_{\infty}
 \leq  c  \sum\limits_{n \in \mathbb{Z}} \| f_{\chi_{I_n}} \|_{L^2} = c \|   f  \|_{L^2}$
for any $ f \in \mathbb{V}_0$. It therefore follows by the
definition of $\mathbb{V}_k$ that $\| f \|_{\infty} \leq c N^{k/2}\|
f  \|_{L^2}$ for all $f \in \mathbb{V}_k$. Consequently, if $ f \in
\bigcap\limits_{k \in \mathbb{Z}} \mathbb{V}_k$, then $\| f
\|_{\infty}=0$ which implies $f=0$.

 (c) For showing that $\mbox{clos}_{L_2}\ \bigcup\limits_{m \in \mathbb{Z}} \mathbb{V}_m =
L^2(\mathbb{R})$, let $f=(f_1,f_2) \in \tilde{\mathbb{V}}_0$, where
$f_1$ is a CHFIF passing through $(x_k,1)$ and $f_2$ is an AFIF
passing through $(x_k,z_k)$. For all $x \in \mathbb{R}$,
by~\eqref{eq:ypoints},
\begin{equation*}
f_1 = \sum\limits_k
 \bigg(\sum\limits_{i=1}^{N-1}  C_{k,i}
 \tilde{f}^*_{i,1} (x-k) +  \phi_N(x-k) + \sum\limits_{i=1}^{N-1}
 D_{k,i} \tilde{f}^*_{N+1+i,1} (x-k)\bigg)
 \end{equation*}
 where,
\begin{equation*}
 C_{k,i} =
\Big(1-r_i-s_i-\sum\limits_{j=1}^{N-1}u_{j,i}z_j\Big) %\\  %-z_0 u_{0,i}-z_N u_{N,i}
\quad \mbox{and} \quad D_{k,i}  =z_i,  \quad i=1,\ldots,N-1.
\end{equation*}
Now, since $\phi_{i, 1}$ are continuous and compactly supported, by
using (b) and Proposition 3.1 of~\cite{geronimo94},  it follows that
$\bigcup\limits_{k \in \mathbb{Z} } \mathbb{V}_k$ is dense  in
$L_2(\mathbb{R})$.

 (d) For proving that the functions $\phi_{i,1}$,  $i=1, \ldots, 2N-1$, and
 their integer translates form a Riesz basis for $\mathbb{V}_0$, let
$\tau$ be the smallest eigenvalue of the matrix
\begin{equation*}
 \frac{1}{
\|\phi_{N,1}\|^2 }  \left(\begin{array}{ll} \left(\int_I |f_{0,1}
(x)|^2 dx \right)^{1/2} & \
 \left(\int_I |f_{0,1} (x)||f_{N,1} (x)| dx \right)^{1/2} \\  \left(\int_I |f_{0,1} (x)||f_{N,1} (x)| dx \right)^{1/2}
 & \ \left(\int_I |f_{N,1} (x)|^2 dx \right)^{1/2}
 \end{array} \right).
\end{equation*}

Since $f_{0,1}$ and $f_{N,1}$ are linearly
 independent, the determinant of the matrix is positive  which
 implies $ \tau > 0$. Taking $A = \sqrt{\tau}\ \|\phi_{N,1}\|_{L_2}
 $ and $B= \sqrt{3}\|\phi_{N,1}\|_{L_2} $,
 it is seen that, for every  $c = \{c_i\} \in l^2$,  $ A \|c\|_{l^2} \leq  \| \sum
 c_i \phi_{N,1}~( \cdot~-~i)\|_{L^2} \leq B \|c\|_{l^2}$. 
 
Further,
 the  functions $\phi_{i,1}$,   $i=1,\ldots,2N-1, i \neq N$, and
 their integer translates are mutually orthogonal. Therefore, the functions $\phi_{i,1}$, $i=1,\ldots,2N-1$, and
 their integer translates form a Riesz basis for $\mathbb{V}_0$.
   \end{proof}

\begin{remark}\label{rem:mra}
By Theorem~\ref{th:mra}, it follows that the set $\{\hat{\phi}_{i,1}
: i=1,2,\ldots,2N-1\} \subset \mathbb{V}_0 $, where $
\hat{\phi}_{i,1}(x) =\phi_{i,1}(x)/\|\phi_{i,1}\|_{L^2}$,
$i=1,2,\ldots,2N-1 $, generates a continuous, compactly supported
multiresolution analysis of $L_2(\mathbb{R})$ by orthonormal
functions.
\end{remark}

\section{Construction of Orthogonal Scaling Functions and Wavelets }\label{sec:OSF}
For generating multiresolution analysis in Theorem~\ref{th:mra}, the
need of orthogonality of functions $\phi_{i,1} \in \mathbb{V}_0$
(c.f.~\eqref{eq:setV0}), $i=1,2,\ldots,2N-1$, requires that CHFIFs
$\tilde{f}_{i,1}$ satisfy conditions~\eqref{eq:risiuij}
and~\eqref{eq:zetaeta}. In this section, as an example,  the values
of parameters in~\eqref{eq:ypoints} are determined so that the
CHFIFs $\tilde{f}_{i,1}$ satisfy these conditions.  For simplicity
in our presentation, such an example is given for $N = 2$ or,
equivalently, when the dimension of vector space $\mathcal{S}_0^1$
is $4$. In addition to requiring that~\eqref{eq:risiuij}
and~\eqref{eq:zetaeta} hold for functions $\tilde{f}_{i,1}$,  $i
=0,1,2,3$, a condition is derived so that the CHFIFs
$\tilde{f}_{0,1}$ and $\tilde{f}_{2,1}$ are also orthogonal. The
orthogonality of the later CHFIFs implies the orthogonality of
$\phi_{1,1}$ with its integer translates that is needed  for the
construction of orthogonal wavelets. Finally, using these explicitly
constructed orthogonal functions $\phi_{i,1}$, $i =1,2,3$,  the
orthogonal wavelets in the space $\mathbb{V}_{-1} \setminus
\mathbb{V}_0$ (c.f.~\eqref{eq:setVk}) are constructed in this
section for $N = 2$.

\newpage
To find the values of parameters in~\eqref{eq:ypoints}, consider
CHFIFs $\tilde{f}_{i,1}$, $i=0,1,2,3$,  as follows:
\begin{enumerate}[(I)]\itemsep1pt
 \item $
\tilde{f}_{0,1}$  corresponds to the data $\big((0,1),
(\frac{1}{2},r_1),(1,0) \big)$,
 \item$ \tilde{f}_{1,1}$ corresponds to the data $\big((0,0),
(\frac{1}{2},1),(1,0) \big)$,
 \item $ \tilde{f}_{2,1}$ corresponds to the data $\big((0,0),
(\frac{1}{2},s_1),(1,1) \big)$,
 \item $\tilde{f}_{3,1}$ corresponds to the data $\big((0,0),
(\frac{1}{2},u_{1,1}),(1,0) \big)$,\\
 where  $r_1$,  $s_1$  and
$u_{1,1}$ are real parameters given by~\eqref{eq:ypoints}.

Further, AFIF $ \tilde{f}_{i,2}$, $i=0,1,2,3$,  are chosen such that
 \item $ \tilde{f}_{i,2}$, $i=0,1,2$ corresponds to the data $\big((0,0),
(\frac{1}{2},0),(1,0) \big)$,
  \item $\tilde{f}_{3,2}$ corresponds to the data $\big((0,0), (\frac{1}{2},1),(1,0) \big)$.
\end{enumerate}

It is observed that $\tilde{f}_{i,1}$, $i=0,1,2$ are self-affine as
$\tilde{f}_{i,2}$, $i=0,1,2$ are zero functions.
By~\eqref{eq:Fncond}, the values of $t_i(x)$, $i=1,2$,  given
by~\eqref{eq:tn}, in the construction of CHFIF $ \tilde{f}_{j,1}$,
$j=0, 1, 2, 3$, are found as the following:
\begin{enumerate}[(i)]\itemsep1pt
\item for CHFIF $
\tilde{f}_{0,1}$, $t_1 (x) = \big( (r_1+ \al_1 -1) x + (1-\al_1), 0
\big) $ and $t_2 (x) = \big( (\al_2-r_1) x + (r_1-\al_2), 0 \big)$
\item  for CHFIF $ \tilde{f}_{1,1}$, $t_1 (x) = \big( x , 0 \big) $  and $t_2(x) = \big( 1- x ,0
\big)$
\item for CHFIF $
\tilde{f}_{2,1}$, $t_1 (x) = \big( (s_1 - \al_1 ) x , 0 \big) $ and
$t_2(x) = \big( ( 1 - s_1 - \al_2  ) x + s_1, 0 \big)$   and
\item for CHFIF $
\tilde{f}_{3,1}$,  $t_1 (x) = \big( u_{1,1} x , x \big) $ and
$t_2(x) = \big( (1 - x)u_{1,1} ,1- x \big)$.
\end{enumerate}

The inner products given by~\eqref{eq:risiuij},~\eqref{eq:zetaeta}
and $\langle \tilde{f}_{0,1}, \tilde{f}_{2,1}\rangle $ are computed
using~\eqref{eq:I}, wherein  the values of $t_i(x)$, $i=1,2$, given
by (i)-(iv) above,  are used. Using~\eqref{eq:risiuij}, the values
of real parameters $r_1$ and $s_1$  given in~\eqref{eq:ypoints} are
computationally obtained as
\begin{equation}\label{eq:r1}
r_1= \frac{4-4 \al_1^2-6 \al_2-2 \al_1 \al_2+3 \al_1^2 \al_2-4
\al_2^2+3 \al_2^3}{4 \left(-4+\al_1^2-\al_1 \al_2+\al_2^2\right)},
\end{equation}
and
\begin{equation}\label{eq:s1}
s_1=\frac{4-6 \al_1-4 \al_1^2+3 \al_1^3-2 \al_1 \al_2-4 \al_2^2+3
\al_1 \al_2^2}{4 \left(-4+\al_1^2-\al_1 \al_2+\al_2^2\right)}.
\end{equation}

Further, the real parameter $u_{1,1}$, for which~\eqref{eq:risiuij}
is satisfied, can be expressed in the form
\begin{equation}\label{eq:u11}
u_{1,1}= \frac{\beta_1 \ \nu_1(\al_1,\al_2,\gm_1,\gm_2) + \beta_2 \
\nu_2(\al_1,\al_2,\gm_1,\gm_2) }{2 \kappa(\al_1,\al_2,\gm_1,\gm_2)}.
\end{equation}

Finally, the inner products $\zeta = \langle
\tilde{f}_{0,1},\tilde{f}_{3,1}\rangle$ and $\eta = \langle
\tilde{f}_{2,1},\tilde{f}_{3,1}\rangle$ can be expressed as
\begin{equation}\label{eq:zeta}
\zeta =  \frac{\beta_1\ \zeta_1(\al_1,\al_2,\gm_1,\gm_2) + \beta_2 \
\zeta_2(\al_1,\al_2,\gm_1,\gm_2) }{\kappa(\al_1,\al_2,\gm_1,\gm_2)}
\end{equation} and
\begin{equation}\label{eq:eta}
\eta =  \frac{\beta_1\ \eta_1(\al_1,\al_2,\gm_1,\gm_2) + \beta_2 \
\eta_2(\al_1,\al_2,\gm_1,\gm_2) }{\kappa(\al_1,\al_2,\gm_1,\gm_2)}.
\end{equation}

Now, using~\eqref{eq:I}  and $t_i(x)$, $i=1,2$,  of CHFIF $
\tilde{f}_{j,1}$, $j=0,2$, given by (I) and (III) above, it is found
that,
\begin{equation}\label{eq:i02temp}
\langle \tilde{f}_{0,1},\tilde{f}_{2,1}\rangle =
\frac{\begin{bmatrix} 4 (r_1+s_1) (2- \al_1\al_2-2\al_1^2-2\al_2^2)
+  8r_1s_1 (4 + \al_1\al_2  - \al_1^2 - \al_2^2) \\-(\al_1^2 +
\al_2^2)^2 + (1 + \al_1^2 + \al_2^2)^3  + 4(\al_1+\al_2)^2 \\ +
\al_1^3(2-2\al_2+6r_1)+ \al_2^3(2-2\al_1 + 6 s_1) \\ -
6r_1\al_1(2-\al_2^2) -
6s_1\al_2(2-\al_1^2)-2(\al_1\al_2+3\al_1+3\al_2)
\end{bmatrix}}{6(2-\al_1-\al_2)(4-\al_1-\al_2)(2-\al_1^2-\al_2^2)}.
\end{equation}

The substitution of values of $r_1$ and $s_1$ from~\eqref{eq:r1}
and~\eqref{eq:s1} in~\eqref{eq:i02temp} gives,
\begin{equation}\label{eq:i02}
\langle  \tilde{f}_{0,1},\tilde{f}_{2,1}\rangle  =
\frac{\rho(\al_1,\al_2)}{12 \left(-4+\al_1^2-\al_1
\al_2+\al_2^2\right) \left(8-6 \al_1+\al_1^2-6 \al_2+2 \al_1
\al_2+\al_2^2\right)}
\end{equation}
where,
\begin{align}\label{eq:rho}
\rho(\al_1,\al_2) &=8+12 \al_1-28 \al_1^2+6 \al_1^3+2 \al_1^4+12
\al_2-14 \al_1 \al_2+18 \al_1^2 \al_2-7 \al_1^3 \al_2 \nno
\\  &\quad \mbox{} -28 \al_2^2+18 \al_1 \al_2^2+6 \al_2^3-7
\al_1 \al_2^3+2 \al_2^4.
\end{align}

Thus, by~\eqref{eq:rho}, $\langle
\tilde{f}_{0,1},\tilde{f}_{2,1}\rangle= 0 $ if and only if
$\rho(\al_1,\al_2)=0$.

To find the orthogonal scaling functions from CHFIF, the free
variables $\al_1, \al_2, \gm_1, \gm_2$  and constrained variables $
\bt_1, \bt_2$  need to be chosen such that $\zeta$,  $\eta$  given
by~\eqref{eq:zeta},~\eqref{eq:eta} respectively satisfy $\zeta=0,\
\eta=0 $ and $ \rho(\al_1,\al_2) =0$. It is found that, with $\al_1
=0$ , $ \al_2 = -3+\sqrt{7}$, $\gm_1 = \frac{-9}{10} $ and $\gm_2 =
\frac{1}{10} \left(-67+29 \sqrt{7}\right)$,
\begin{equation}\label{eq:zee12}
\ \zeta_1(\al_1,\al_2,\gm_1,\gm_2)
\eta_2(\al_1,\al_2,\gm_1,\gm_2))-\eta_1(\al_1,\al_2,\gm_1,\gm_2)\zeta_2(\al_1,\al_2,\gm_1,\gm_2)
= 0\
\end{equation}
and $\ \rho(\al_1,\al_2)=~0$. Writing~\eqref{eq:zeta}
and~\eqref{eq:eta} in matrix form and using~\eqref{eq:zee12}, it is
observed that, there exist infinitely many values of $\bt_1$ and
$\bt_2$ satisfying $\zeta=0,\ \eta=0 $ and $ \rho(\al_1,\al_2) =~0$
with the above values of  $\al_1$ , $ \al_2$ ,   $\gm_1 $ and
$\gm_2$. For example,  one pair of such values of $\bt_1$ and
$\bt_2$ is given by $\beta_1=\frac{1}{20}$ and $\ \beta
_2=\frac{1}{20} \left(3-\sqrt{7}\right)$. Consequently, the
following theorem is obtained from the above analysis:

\begin{theorem}\label{thm:cd}
The functions $ \tilde{f}_{i,1} $, $i=0, 1,2,3$, given by (I)-(IV)
above, are orthogonal CHFIFs in $\mathcal{S}_0^1$ if and only if the
following conditions are satisfied:
\begin{enumerate}[(a)]\itemsep1pt
\item$|\al_1| < 1 , |\al_2| < 1, |\gm_1| < 1 , |\gm_2| < 1,
|\bt_1|+|\gm_1| < 1 $ and $ |\bt_2| + |\gm_2| < 1$
\item  The values of $r_1,s_1$ and $u_{1,1}$ are given by
 \eqref{eq:r1}, \eqref{eq:s1}
 and \eqref{eq:u11}, respectively.
\item The function $\rho(\al_1,\al_2)$, given by~\eqref{eq:rho}, is such that
$\rho(\al_1,\al_2)=0$.
\item The functions $ \zeta$  and $ \eta $, given by~\eqref{eq:zeta}
and~\eqref{eq:eta}, respectively are such that  $ \zeta=0$ and $
\eta=0 $.
 \end{enumerate}
\end{theorem}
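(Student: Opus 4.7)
The plan is to reduce the biconditional to explicit computation of the six pairwise inner products $\langle \tilde{f}_{i,1},\tilde{f}_{j,1}\rangle$, $0\leq i<j\leq 3$, via the general formula~\eqref{eq:I} specialized to $N=2$ with the linear polynomials $t_i(x)$ listed in (i)--(iv) preceding the theorem. Condition (a) is the standing contractivity hypothesis needed for the IFS~\eqref{eq:chifs} and hence for the CHFIFs $\tilde{f}_{i,1}$ to exist in $\mathcal{S}_0^1$; it is therefore assumed throughout, and its necessity is immediate.

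For the remaining conditions, I would proceed pair by pair. Because $\tilde{f}_{0,2}$, $\tilde{f}_{1,2}$, $\tilde{f}_{2,2}$ all vanish identically, the three functions $\tilde{f}_{0,1}$, $\tilde{f}_{1,1}$, $\tilde{f}_{2,1}$ are self-affine, so substituting their associated $t_i(x)$ into~\eqref{eq:I} gives inner products that are affine in the remaining free parameter on each side. In particular, $\langle \tilde{f}_{1,1},\tilde{f}_{0,1}\rangle$ and $\langle \tilde{f}_{1,1},\tilde{f}_{2,1}\rangle$ are affine in $r_1$ and $s_1$ respectively; setting each equal to zero and solving yields the closed-form expressions~\eqref{eq:r1} and~\eqref{eq:s1}, establishing the $r_1,s_1$ half of (b). For $u_{1,1}$, I would evaluate $\langle \tilde{f}_{3,1},\tilde{f}_{1,1}\rangle$ through~\eqref{eq:I}; since $t_i(x)$ in (iv) depends linearly on $u_{1,1}$, the equation is affine in $u_{1,1}$ with coefficients polynomial in $\al_i,\gm_i,\bt_i$, and inversion gives precisely~\eqref{eq:u11}, finishing (b).

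With $r_1$ and $s_1$ substituted, $\langle \tilde{f}_{0,1},\tilde{f}_{2,1}\rangle$ reduces to the rational expression~\eqref{eq:i02temp}, which simplifies to~\eqref{eq:i02} with numerator $\rho(\al_1,\al_2)$ as in~\eqref{eq:rho}. Since the denominator does not vanish under (a), this inner product vanishes iff $\rho(\al_1,\al_2)=0$, which is (c). Lastly, direct evaluation of $\zeta = \langle \tilde{f}_{0,1},\tilde{f}_{3,1}\rangle$ and $\eta = \langle \tilde{f}_{2,1},\tilde{f}_{3,1}\rangle$ via~\eqref{eq:I} and the $t_i(x)$ from (I), (III), (IV) produces the forms~\eqref{eq:zeta} and~\eqref{eq:eta}, so that $\zeta=\eta=0$ is (d). All three distinct remaining pairs now being orthogonal, the forward implication follows; and since each of the steps above is a direct identity, the derivations are reversible, giving the converse.

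The main obstacle is purely computational: formula~\eqref{eq:I} spawns a large number of cross-terms in $\al_n,\bt_n,\gm_n,p_n,q_n$, and collapsing the substituted expression for $\langle \tilde{f}_{0,1},\tilde{f}_{2,1}\rangle$ down to the clean polynomial~\eqref{eq:rho} demands careful symbolic algebra. To keep the work tractable I would separate the $\bt$-dependent and $\bt$-independent contributions at the outset (which transparently explains the affine dependence of $u_{1,1}$, $\zeta$, $\eta$ on $(\bt_1,\bt_2)$ exhibited in~\eqref{eq:u11},~\eqref{eq:zeta},~\eqref{eq:eta}), record~\eqref{eq:i02temp} as an intermediate checkpoint before substituting~\eqref{eq:r1} and~\eqref{eq:s1}, and verify non-vanishing of each denominator on the parameter region cut out by (a).
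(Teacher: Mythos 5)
Your proposal is correct and follows essentially the same route as the paper: condition (a) is exactly the existence condition for the CHFIFs, and each of (b), (c), (d) is matched with the vanishing of the corresponding pairwise inner products $\langle \tilde{f}_{1,1},\tilde{f}_{0,1}\rangle$, $\langle \tilde{f}_{1,1},\tilde{f}_{2,1}\rangle$, $\langle \tilde{f}_{1,1},\tilde{f}_{3,1}\rangle$, $\langle \tilde{f}_{0,1},\tilde{f}_{2,1}\rangle$, $\zeta$ and $\eta$, computed via~\eqref{eq:I} with the $t_i(x)$ from (i)--(iv), exactly as in the paper's proof of Theorem~\ref{thm:cd}. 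Your added remarks on the affine dependence on $r_1$, $s_1$, $u_{1,1}$, $\bt_1$, $\bt_2$ and on checking the non-vanishing of denominators are a slightly more explicit justification of the reversibility that the paper leaves implicit.
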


\begin{proof}
\begin{enumerate}[(a)]\itemsep0pt
\item The functions $ \tilde{f}_{i,1} $, $i=0, 1,2,3$, given by
(I)-(IV), are CHFIFs if and only if  the identity (a) holds.
\item It follows by the choice of functions as in (i)-(iv) that the
values of $r_1$, $s_1$ and $u_{1,1}$  are given
by~\eqref{eq:r1},~\eqref{eq:s1} and~\eqref{eq:u11} if and only if $\
<\tilde{f}_{0,1},\tilde{f}_{1,1}>  =~0 $, $\
<\tilde{f}_{1,1},\tilde{f}_{2,1}>  =~0 $ and
 $\ <\tilde{f}_{1,1},\tilde{f}_{3,1}>  =~0 $
 respectively.
\item  By~\eqref{eq:i02}, it follows that, $\rho(\al_0,\al_1) = 0$  if and only if  $\
< \tilde{f}_{0,1},\tilde{f}_{2,1}>  = 0 $.
\item  By~\eqref{eq:zeta} and~\eqref{eq:eta}, it is seen that, $ \zeta=0$ and $ \eta=0$
if and only if   $\ < \tilde{f}_{0,1},\tilde{f}_{3,1}>  = 0 $ and $\
< \tilde{f}_{2,1},\tilde{f}_{3,1}>  = 0 $ respectively.
\end{enumerate}
Thus, the functions $ \tilde{f}_{i,1} $, $i=0, 1,2,3$, given by
(I)-(IV) above, are orthogonal functions in $\mathcal{S}_0^1$ if and
only if conditions (a)-(d) are satisfied.  \end{proof}

Having constructed orthogonal functions $ \tilde{f}_{i,1} \in
\mathcal{S}_0^1 $, $i =0,1,2,3$,   that lead to  generation of a
multiresolution analysis of $L_2(\mathbb{R})$ for $N = 2$, by
explicitly determined orthogonal functions $\phi_{i,1}$,  $i=1,2,3$,
it is natural to seek how orthogonal compactly supported continuous
wavelets based on the functions $\phi_{i,1}$, are constructed in
this case. For developing such wavelets based on the vector $\Phi_1
= [ \phi_{1,1} , \phi_{2,1} , \phi_{3,1} ]^t$ of orthogonal scaling
functions and their integer translates, define the wavelet spaces
$\mathbb{W}_k$, $k \in \mathbb{Z}$, as the orthogonal complement of
$\mathbb{V}_k$ (c.f.~\eqref{eq:setVk}) in $ \mathbb{V}_{k-1}$.
Consider the vector  $\Psi_1= [ \psi_{1,1} , \psi_{2,1} , \psi_{3,1}
]^t $, where $\psi_{i,1} \in \mathbb{W}_0$, supported on $[0, 2]$,
are CHFIFs orthogonal to $\phi_{i,1}$, $i=1,2,3$, and their integer
translates. By~\eqref{eq:setVk}, the functions $\psi_{i,1} \in
\mathbb{V}_{k-1} $, $\ i=1,2,3 $, are first components of functions
$\psi_{i} = (\psi_{i,1},\psi_{i,2}) \in \tilde{\mathbb{V}}_{k-1} $,
where $\psi_{i,2}$, $\ i=1,2,3 $,   are AFIFs supported on $[0,2]$.
The set $\{\psi_{1,1}, \psi_{2,1} \psi_{3,1}\} $ is called a `set of
wavelets' associated with the scaling functions $\phi_{1,1} ,
\phi_{2,1} , \phi_{3,1}$  if  $ \{\psi_{i,1}(\cdot -l), i=1,2,3, l
\in \mathbb{Z} \}$ forms a Riesz basis of the space $\mathbb{W}_0$.
The following theorem gives the existence and construction of
non-zero functions $\psi_{i,1}$, $i=1,2,3$, orthogonal to $
\phi_{i,1}$, $i=1,2,3$, and their integer translates:

\begin{theorem}
Let $ \phi_{i,1} \in \mathbb{V}_0 $, $i=1,2,3$, be orthogonal
scaling functions satisfying~\eqref{eq:V0clos}. Then, there exist
non-zero functions $\psi_{i,1} \in \mathbb{W}_0 $,
 orthogonal to $ \phi_{i,1}$, $i=1,2,3$, and their integer
translates.
\end{theorem}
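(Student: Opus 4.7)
The plan is to produce the $\psi_{i,1}$ as compactly supported elements of $\mathbb{V}_{-1}$ that are orthogonal to every function in $\mathbb{V}_0$. Since $\mathbb{W}_0 = \mathbb{V}_{-1} \ominus \mathbb{V}_0$ by definition, any such candidate automatically lies in $\mathbb{W}_0$; moreover, orthogonality to $\phi_{i,1}$ and all of its integer translates then follows from the fact, established in Theorem~\ref{th:mra}, that those translates form a Riesz basis of $\mathbb{V}_0$.

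First, I would work inside the finite-dimensional subspace $U \subset \mathbb{V}_{-1}$ consisting of functions supported in $[0,2]$. Rescaling Theorem~\ref{th:mra} shows that $\mathbb{V}_{-1}$ has a Riesz basis $\{\phi_{j,1}(2\cdot - k) : j = 1,2,3,\ k \in \mathbb{Z}\}$. Since $\phi_{1,1}$ and $\phi_{3,1}$ are supported on $[0,1]$ while $\phi_{2,1}$ is supported on $[0,2]$, the basis elements that sit inside $[0,2]$ correspond to $k \in \{0,1,2,3\}$ for $j=1,3$ and $k \in \{0,1,2\}$ for $j=2$, yielding $\dim U = 11$.

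Next, I would translate the requirement $\psi \in \mathbb{W}_0$ into a finite linear system. For $\psi \in U$ the inner product $\langle \psi, \phi_{j,1}(\cdot - l)\rangle$ vanishes automatically whenever $\phi_{j,1}(\cdot - l)$ has support disjoint (up to a null set) from $[0,2]$. A direct support inspection leaves only $l \in \{0,1\}$ for $j=1,3$ and $l \in \{-1,0,1\}$ for $j=2$, i.e.\ at most seven homogeneous linear equations in the eleven coefficients of $\psi$. Hence $U \cap \mathbb{W}_0$ has dimension at least $4$, from which three linearly independent non-zero functions $\psi_{1,1}, \psi_{2,1}, \psi_{3,1}$ can be selected; a Gram--Schmidt step will orthogonalise them if mutual orthogonality is desired.

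The main obstacle will be confirming that the eleven dilated translates are genuinely linearly independent on $[0,2]$; this follows from the Riesz basis property but merits an explicit check given the very specific parameter values chosen in Section~\ref{sec:OSF}. A secondary concern is whether the seven orthogonality relations are themselves independent, but any hidden dependence would only enlarge the solution space and so does not threaten existence. Concretely, substituting the explicit values $\al_1=0$, $\al_2=-3+\sqrt{7}$, $\gm_1 = -9/10$, $\gm_2 = (-67+29\sqrt{7})/10$, $\bt_1=1/20$ and $\bt_2 = (3-\sqrt{7})/20$ into~\eqref{eq:I} allows both the dimension count and the three wavelets themselves to be computed explicitly.
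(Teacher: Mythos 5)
Your argument is sound and proves the stated theorem, but it takes a genuinely different route from the paper. The paper does not argue by dimension counting in $\mathbb{V}_{-1}$: it parametrizes the candidate wavelets directly by interpolation data, prescribing each $\psi_{i,1}$ (together with a hidden component $\psi_{i,2}$) as four CHFIF, respectively AFIF, pieces on $[0,2]$ determined by $42$ unknown nodal values $A_{i,l},B_{i,l}$; the orthogonality requirements (A)--(D) — which include mutual orthogonality of the $\psi_{i,1}$ and extra conditions on the second components — together with normalization give $39$ nonlinear equations, and existence is then settled by exhibiting an explicit numerical solution for the parameter choice $\al_1=0$, $\al_2=-3+\sqrt{7}$, $\gm_1=-9/10$, $\gm_2=(-67+29\sqrt{7})/10$, $\bt_1=1/20$, $\bt_2=(3-\sqrt{7})/20$. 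Your route is a soft linear-algebra argument: the eleven half-scale translates $\phi_{j,1}(2\cdot-k)$ supported in $[0,2]$ are linearly independent by the rescaled Riesz-basis property from Theorem~\ref{th:mra}, orthogonality to $\mathbb{V}_0$ reduces by support considerations to at most seven linear functionals (only $\phi_{1,1}(\cdot-l),\phi_{3,1}(\cdot-l)$ with $l=0,1$ and $\phi_{2,1}(\cdot-l)$ with $l=-1,0,1$ meet $(0,2)$), and rank--nullity yields a solution space of dimension at least four inside $\mathbb{W}_0$, from which three nonzero (if desired, mutually orthogonal) functions can be chosen. For bare existence your argument is actually tighter than the paper's, since counting nonlinear equations against unknowns guarantees nothing by itself and the paper's existence claim really rests on the exhibited numerical solution; what your argument does not deliver is what the paper wants afterwards, namely wavelets realized as CHFIFs with AFIF hidden components, their explicit nodal values, and the additional second-component orthogonality relations (C)--(D) used in the hidden-variable setting. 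Two small points to tidy: you only need that the eleven dilated translates span an $11$-dimensional subspace of the support-constrained space $U$ — the asserted equality $\dim U=11$ would require a local linear independence argument, but the inequality suffices for the kernel estimate; and, like the paper's own construction, you are reading $\mathbb{V}_{-1}$ as the half-scale (finer) space spanned by the $\phi_{j,1}(2\cdot-k)$, which is what the nesting in Proposition~\ref{prop:subseq1} and the definition of $\mathbb{W}_0$ intend, even though the formula \eqref{eq:setVk} taken literally would invert the scaling; it is worth saying this explicitly.
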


\begin{proof}
Set
\begin{equation}\label{eq:psii1}
 \psi_{i,1} = \left \{
\begin{array}{lll}
g_{1,i,1}(x)& \ \mbox{for}\ & 0 \leq x \leq \frac{1}{2} \\
g_{1,i,2}(x)& \ \mbox{for}\ &  \frac{1}{2} \leq x \leq 1 \\
g_{1,i,3}(x)& \ \mbox{for}\ &  1 \leq x \leq \frac{3}{2} \\
g_{1,i,4}(x)& \ \mbox{for}\ &  \frac{3}{2} \leq x \leq 2 \\
0 & & \mbox{otherwise}
 \end{array} \right.  \quad \quad  i=1,2,3
\end{equation}
and
\begin{equation} \label{eq:psii2}
\psi_{i,2} = \left \{
\begin{array}{lll}
g_{2,i,1}(x)& \ \mbox{for}\ &  0 \leq x \leq \frac{1}{2} \\
g_{2,i,2}(x)& \ \mbox{for}\ &  \frac{1}{2} \leq x \leq 1 \\
g_{2,i,3}(x)& \ \mbox{for}\ &  1 \leq x \leq \frac{3}{2} \\
g_{2,i,4}(x)& \ \mbox{for}\ &  \frac{3}{2} \leq x \leq 2 \\
0 & &\mbox{otherwise}
 \end{array} \right.  \quad \quad  i=1,2,3
\end{equation}
where, CHFIFs $g_{1,i,j}$, $i=1,2,3$,  $j=1,2,3,4$,    corresponds
to the data $\{(x_k, A_{i,l})  : l=2j+k-2,  k=0,1,2\} $,
$A_{i,0}=0=A_{i,8}$ and AFIFs $g_{2,i,j}$, $i=1,2,3$,  $j=1,2,3,4$,
corresponds to the data $\{(x_k, B_{i,l}) :  l=2j+k-2,  k=0,1,2\} $,
$B_{i,0}=0=B_{i,8}$. It is observed that there are 42 unknowns
values, $A_{i,l}$ and $B_{i,l}$, $i=1,2,3$ and $l=1,\ldots,7$. 

The
values of $A_{i,l}$ and $B_{i,l}$, $i=1,2,3$, $l=1,\ldots,7$,  are
found such that
\begin{enumerate}[ (A) ]\itemsep1pt
\item
$\psi_{i,1}$ is orthogonal to $\phi_{j,1}$ and their integer
translates for $i,j=1,2 \ \& \ 3$,
\item $\psi_{i,1}$ is orthogonal
to $\psi_{j,1}$ for $i \neq j,\ i,j=1,2 \ \& \ 3$,
\item $\psi_{i,2}$ is orthogonal to $\phi_{2,2}$ for $ i=1,2 \ \& \ 3$ and
their integer translates  and
\item $\psi_{i,2}$ is orthogonal to $\phi_{j,1}$  for $ i=1,2 \ \& \
3$, $j=1,2$.
\end{enumerate}
These conditions (A)-(D) give rise to a set of 36 non-linear
equations. The functions $\psi_{i,1}$ for $i =1,2\ \& \ 3$ are
normalized to give 3 more equations. Thus, there are 39 non-linear
equations with 42 variables. Hence, there are 3 free variables.
Solving these equations, we get the values of $A_{i,l}$ and
$B_{i,l}\ $,   $\  i=1,2,3$, $\ l=1,\ldots,7$.

Now to show there exist such non-zero functions, let $\al_1 =0$ , $
\al_2 = -3+\sqrt{7}$, $\gm_1 = \frac{-9}{10} $, $\gm_2 =
\frac{1}{10} \left(-67+29 \sqrt{7}\right)$, $\bt_1=\frac{1}{20}$ and
$ \bt_2=\frac{1}{20} \left(3-\sqrt{7}\right)$. Then, the values of
$r_1, s_1$ and $u_{1,1}$ (c.f.~\eqref{eq:r1},~\eqref{eq:s1}
and~\eqref{eq:u11}) for ensuring the orthogonality of $\phi_{i,1}$,
$i=1,2,3$,   are obtained as $ r_1=-3+\sqrt{7}, \ s_1=\frac{1}{6}
\left(-4+\sqrt{7}\right)$ and $u_{1,1}=\frac{-371-40
\sqrt{7}}{70245}$. Further, it is observed that $\zeta_1 =0$ and
$\eta_1 =0$ (c.f.~\eqref{eq:zeta} and~\eqref{eq:eta}).  Therefore, $
\phi_{i,1} \in \mathbb{V}_0 $, $i=1,2,3$,
 are orthogonal scaling functions satisfying~\eqref{eq:V0clos}.  

One possible values of $A_{i,l}$
and $B_{i,l}$, $i=1,2,3$,  and $l=1,\ldots,7$, is obtained as

$A_{1,1}=-1.04784, \ A_{1,2}=0.0125935, A_{1,3}=-1.04663, \
A_{1,4}=0.0231596, $

$ A_{1,5}=0.00599567, \ A_{1,6}=-0.00795969, A_{1,7}=0.00391617, $

$A_{2,1}=0, \ A_{2,2}=0, \ A_{2,3}=-0.298716, \ A_{2,4}=1.32346, \
A_{2,5}=-2.4746, $

$ A_{2,6}=1.12432, \ A_{2,7}=-0.553166, $

$ A_{3,1}=0, \ A_{3,2}=0, \ A_{3,3}=0, A_{3,4}=0, $

$ A_{3,5}=1.06312, \ A_{3,6}=0, \ A_{3,7}=0.983686, $

$B_{1,1}=19.1929,  B_{1,2}=-21.6229, \ B_{1,3}=11.8901, \
B_{1,4}=-11.1171,$

$ B_{1,5}=-4.93066, \ B_{1,6}=1.19803, \ B_{1,7}= 0.567807, $

$ B_{2,1}=0, \ B_{2,2}=0, \ B_{2,3}=0, \ B_{2,4}=0, $

$B_{2,5}=-11.6825,  B_{2,6}=-15.2071, \ B_{2,7}=-3.19525, $

$B_{3,1}=0, \ B_{3,2}=0, \ B_{3,3}=0, \ B_{3,4}=0, $

$B_{3,5}=-13.3015, B_{3,6}=33.9169 \ B_{3,7}=-11.0405$.

Thus, there exist non-zero functions $\psi_{i,1} \in \mathbb{W}_0 $,
 orthogonal to $ \phi_{i,1}$, $i=1,2,3$, and their integer
translates.  \end{proof}

The graphs of $\ \psi_{i,1} $ for $k=1,2,3$ corresponding to the
values of $A_{i,l}$ and $B_{i,l}$ given above is shown in the
Figure~\ref{fig:wv}.

\begin{figure}[hbt]
\centering \subfigure[\textbf{$\ \psi_{1,1}
$}]{\epsfig{file=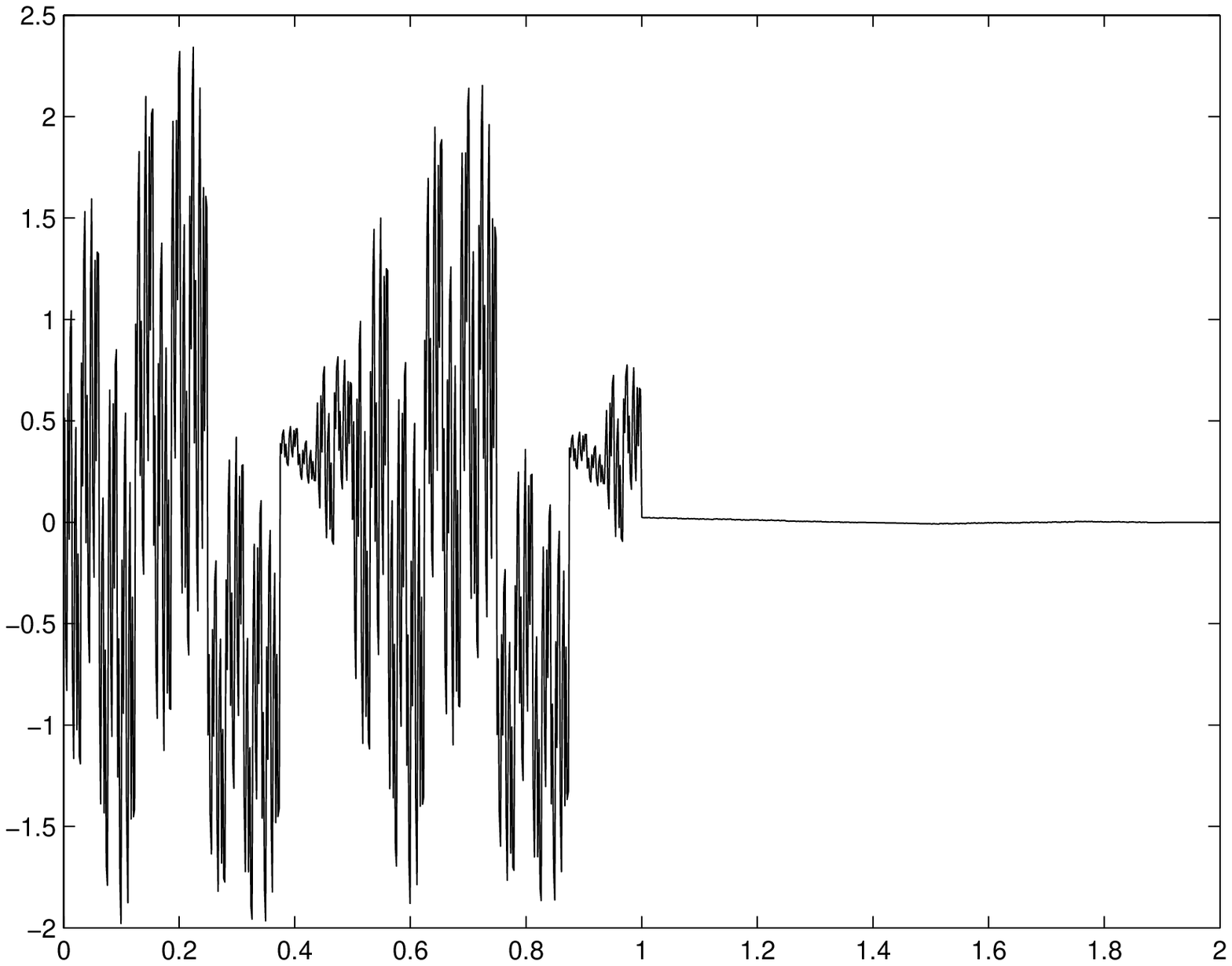, width=3.2cm}\label{fig:ww1}}\hspace{1cm}
\subfigure[\textbf{$\ \psi_{2,1} $}]{\epsfig{file=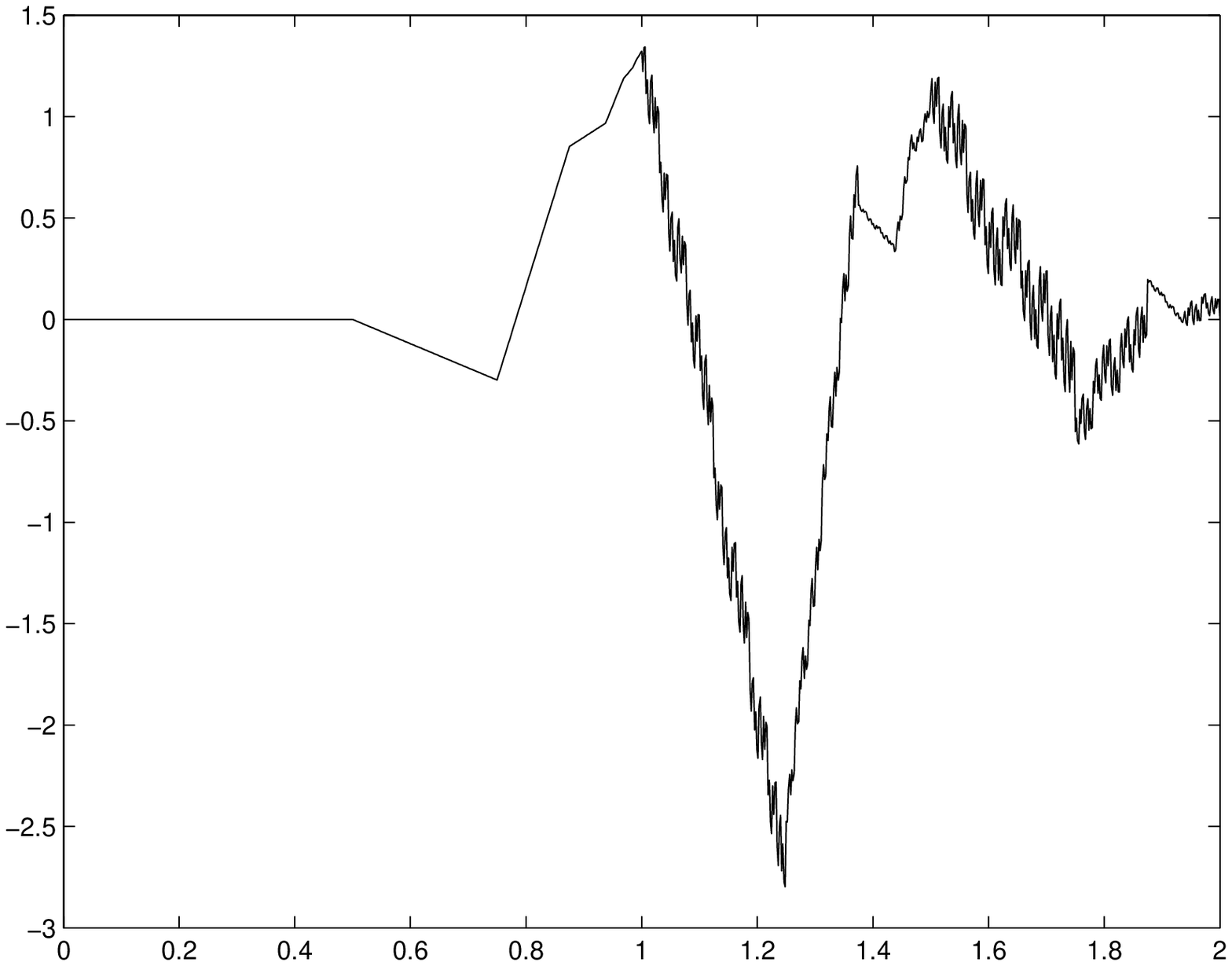,
width=3.2cm}\label{fig:ww2}}\hspace{1cm} \subfigure[\textbf{$\
\psi_{3,1} $}]{\epsfig{file=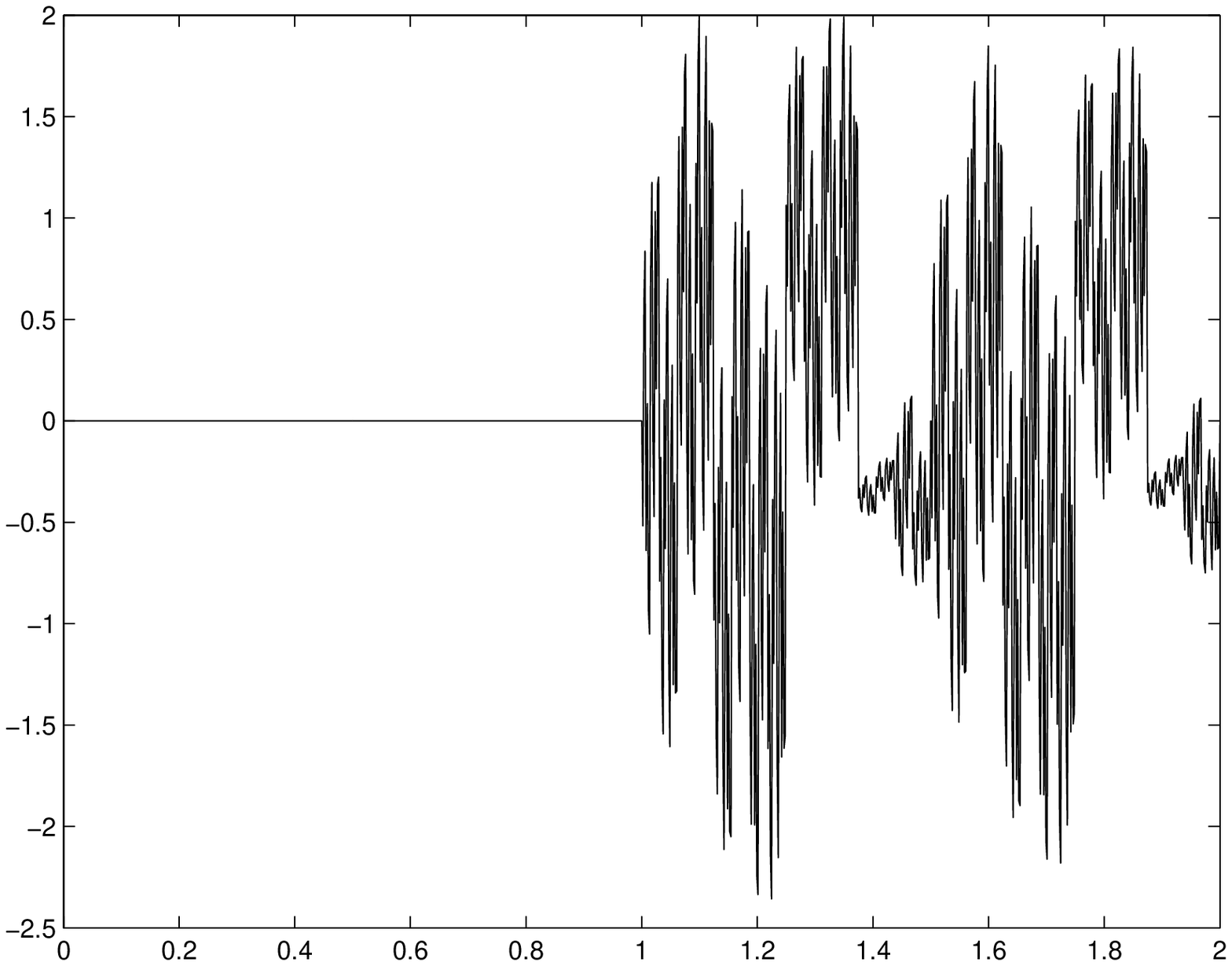,
width=3.2cm}\label{fig:ww3}}\caption{Wavelets $\psi_{i,1}$ for $i
=1,2\ \& \ 3$ \label{fig:wv}}
\end{figure}
 
In general, if $ \phi_{i,1}  $, $i=1,\ldots,2N-1$,  are orthogonal
scaling functions satisfying~\eqref{eq:V0clos}, then there exist
non-zero functions  $\psi_{i,1} \in \mathbb{W}_0 $,
$i=1,\ldots,(2N-1)(N-1)$ orthogonal to $ \phi_{i,1}$,
$i=1,\ldots,2N-1$, and their integer translates.

\section{Conclusions}
In this paper, multiresolution analysis arising from Coalescence
Hidden-variable Fractal Interpolation Functions is accomplished. The
availability of a larger set of free variables and constrained
variables with CHFIF in multiresolution analysis based on CHFIFs
provides more control in reconstruction of functions in
$L_2(\mathbb{R})$ than that provided by multiresolution analysis
based only on affine FIFs. In our approach, the vector space of
CHFIFs is introduced, its dimension is determined and Riesz bases of
vector subspaces $\mathbb{V}_k, k \in \mathbb{Z}$, consisting of
certain CHFIFs in $L_2(\mathbb{R})\bigcap C_0(\mathbb{R})$ are
constructed. As a special case, for the vector space of CHFIFs of
dimension $4$, orthogonal bases for the vector subspaces
$\mathbb{V}_k, k \in \mathbb{Z}$, are explicitly constructed and,
using these bases, compactly supported continuous orthonormal
wavelets are generated.

\section*{Acknowledgement}
The second author thanks CSIR for Research Grant No:
9/92(417)/2005-EMR-I for the present work.

\end{document}